\newtheorem{theorem}{Theorem}
\newtheorem{definition}{Definition}
\newtheorem{proposition}{Proposition}
\newtheorem{corollary}{Corollary}
\newtheorem{lemma}{Lemma}
\numberwithin{equation}{section}
\title{Automorphisms of the worm domain}
\author{Fani Xerakia}
\begin{document}

\begin{abstract}
   The Diederich-Fornæss worm domain, an important example of a smoothly bounded pseudoconvex domain without a Stein neighborhood basis, provides key counterexamples in the theory of Several Complex Variables. In this paper, we examine its automorphism group and observe that its boundary is locally spherical everywhere except at the exceptional locus and the caps.
\end{abstract}
\maketitle

\section{Introduction}
Pseudoconvex domains play a central role in the theory of several complex variables. They are precisely the domains of holomorphy, domains beyond which holomorphic functions cannot be extended. In this sense, they generalize the notion of domains in one complex variable, where every domain is a domain of holomorphy.

Any pseudoconvex domain has an exhaustion by an increasing sequence of \linebreak[4] smoothly bounded, strongly pseudoconvex subdomains. This naturally leads to the dual question: can one approximate the closure of a pseudoconvex domain from the exterior by a decreasing sequence of smoothly bounded pseudoconvex domains? A domain with this property is said to admit a Stein neighborhood basis.

The Hartogs triangle
\begin{align*}
    \mathcal{D}=\left\{(z,w) \in \mathbb{C}^2:0<|z|<|w|<1 \right\}
\end{align*}
provides a classical example of a domain in $\mathbb{C}^2$ that lacks a Stein neighborhood basis. Initially, it was believed that the reason for this was the non-smoothness of the boundary.

To address this question, Diederich and Fornæss constructed the worm domain in 1977 \cite{Diederich1977} as the first example of a smoothly bounded pseudoconvex domain without a Stein neighborhood basis.

\begin{definition}
    The Diederich Fornæss worm domain is the set
\begin{align*}
    \mathcal{W}_{\mu} = \left\{ (z,w) \in \mathbb{C}^2 : \left| w - e^{i \log |z|^2} \right|^2 
    < 1-\eta \left( \log |z|^2 \right), z \neq 0 \right\}
\end{align*}
where $\eta: \mathbb{R} \to \mathbb{R}$ is a smooth bump function satisfying:
\begin{itemize}
    \item[(i)] $\eta \geq 0$, even and convex;
    \item[(ii)] $\eta^{-1}(0) = [-\mu, \mu],$ for some $\mu \in \mathbb{R};$
    \item[(iii)] there exists $a > 0$ such that $\eta(x) > 1$ if $|x| > a;$
    \item[(iv)] $\eta'(x) \neq 0$ whenever $\eta(x) = 1.$
\end{itemize}
\end{definition}

For notational convenience, we will omit the subscript $\mu$ and refer to the worm domain simply as $\mathcal{W}.$

Since its introduction, the worm domain has served as a fundamental counterexample in several complex variables, challenging many previously held assumptions.

Let $\Omega_1$ and $\Omega_2$ be smoothly bounded domains, and let $F: \Omega_1 \to \Omega_2$ be a biholomorphism. A natural question is whether $F$ extends as a $\mathrm{C}^{\infty}$ diffeomorphism $\Tilde{F}: \overline{\Omega}_1 \to \overline{\Omega}_2.$ If both domains are strongly pseudoconvex, Fefferman's result \cite{Fefferman1974-an}, shows that such an extension exists. For Levi pseudoconvex domains, Bell \cite{feef0656-082d-3749-99a7-2cad1870505b} and Bell-Ligocka \cite{Bell1980-ev} showed that the extension remains possible, provided one of the domains satisfies Condition $\mathrm{R}$.

However, the worm domain does not satisfy Condition $\mathrm{R}$ \cite{christ1995globalcnfirregularitybarpartialneumann}. Therefore, one may consider a weaker version of the problem: whether a biholomorphic automorphism of the worm extends smoothly to the boundary. This question has an affirmative answer, since the automorphism group of the worm domain consists only of rotations in the $z$-variable, as stated by Chen in \cite{MR1116311}:
\begin{theorem}
The automorphism group of $\mathcal{W}$ consists only of maps of the form $$(z,w) \mapsto (e^{i \theta}z,w), \text{ } \theta \in \mathbb{R},$$ i.e., rotations in the $z$-variable.
\end{theorem}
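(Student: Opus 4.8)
The plan is to exploit the disc–bundle structure of $\mathcal{W}$ over an annulus: first determine the infinitesimal automorphisms, then rule out extra connected components.

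\textbf{Step 1 (the fibration).} Set $t=\log|z|^{2}$. Since $\eta$ is even, convex, $\eta^{-1}(0)=[-\mu,\mu]$, $\eta>1$ off $[-a,a]$, and $\eta'\ne 0$ where $\eta=1$, the sublevel set $\{\eta<1\}$ is an interval $(-b,b)$ with $\mu<b\le a$. Hence $\pi\colon(z,w)\mapsto z$ maps $\mathcal{W}$ onto the annulus $A=\{\,e^{-b/2}<|z|<e^{b/2}\,\}$ (balanced: $r_{1}r_{2}=1$), with fibre $\pi^{-1}(z)=D\bigl(e^{it},\sqrt{1-\eta(t)}\,\bigr)$, and condition (iv) is exactly what keeps $\partial\mathcal{W}$ smooth at the ``caps'' $|z|=e^{\pm b/2}$ (where the fibre shrinks to a point). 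Thus $\mathcal{W}$ is a bounded pseudoconvex domain, $\pi$ is a disc bundle, so $\mathcal{W}$ deformation retracts onto $A$ and $\pi_{1}(\mathcal{W})\cong\mathbb{Z}$; in particular $\mathcal{W}\not\cong\mathbb{B}^{2}$. Also $\partial\mathcal{W}$ is strongly pseudoconvex off the exceptional locus $\{(z,0):e^{-\mu/2}\le|z|\le e^{\mu/2}\}$. The maps $R_{\theta}\colon(z,w)\mapsto(e^{i\theta}z,w)$ fix $t$ and each fibre, so they form a circle $T\subset\operatorname{Aut}(\mathcal{W})$.

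\textbf{Step 2 (the Lie algebra).} I would show $\mathfrak{aut}(\mathcal{W})=\mathbb{R}\cdot(iz\,\partial_{z})$, so $\operatorname{Aut}(\mathcal{W})^{\circ}=T$. Let $V=a(z,w)\,\partial_{z}+b(z,w)\,\partial_{w}$ be complete; using that $\mathcal{W}$ is smoothly bounded pseudoconvex (hence hyperconvex and taut), one shows its flow — and hence $V$ — is regular along, and tangent to, the strongly pseudoconvex part of $\partial\mathcal{W}$. With $\rho=|w-e^{it}|^{2}-1+\eta(t)$ and $u=w-e^{it}$, a direct computation gives $\partial_{z}\rho=z^{-1}\bigl(2\operatorname{Im}(\bar u e^{it})+\eta'(t)\bigr)$, $\partial_{w}\rho=\bar u$, so tangency reads $\operatorname{Re}\!\bigl(\tfrac{a}{z}(2\operatorname{Im}(\bar u e^{it})+\eta'(t))+b\,\bar u\bigr)=0$ on $\partial\mathcal{W}$. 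Restricting to the part of $\partial\mathcal{W}$ over $\{|t|<\mu\}$ (fibres are unit circles, $\eta\equiv\eta'\equiv 0$), I would expand $a,b$ in powers of $u$ and match Fourier modes on $|u|=1$. The crucial point is that $e^{it}=e^{i\log|z|^{2}}$ is not holomorphic in $z$: each identity produced by the boundary equation takes the shape ``(function holomorphic in $z$) $=$ (function holomorphic in $z$)$\,\cdot\,e^{\pm 2i\log|z|^{2}}$'', which forces the coefficients to vanish. This kills the $\partial_{w}$–component and forces $a=\lambda z$ with $\lambda$ constant; tangency then forces $\lambda\in i\mathbb{R}$. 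So $V\in\mathbb{R}\cdot(iz\,\partial_{z})$ and $\operatorname{Aut}(\mathcal{W})^{\circ}=T$.

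\textbf{Step 3 (no other components).} Since $T=\operatorname{Aut}(\mathcal{W})^{\circ}$ is normal, every $F\in\operatorname{Aut}(\mathcal{W})$ satisfies $FR_{\theta}F^{-1}=R_{\pm\theta}$ for a fixed sign. If the sign is $+$ (so $F$ commutes with $T$), then writing $F=(f,g)$, equivariance gives $\partial_{z}g\equiv 0$ and $\partial_{z}(f/z)\equiv 0$, so $F$ has the form $(z\,h(w),\,g(w))$; feeding this into $F(\mathcal{W})=\mathcal{W}$ and using that a single Möbius map fixing the three–parameter, general–position family $\{D(e^{it},\sqrt{1-\eta(t)})\}_{t\in(-b,b)}$ set–wise must be the identity, one deduces $f=e^{i\theta}z$, $g=w$, so $F\in T$. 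If the sign is $-$, then $F$ reverses the $T$–action; comparing an invariant ($T$–moment / Bergman) function shows $F$ interchanges the two ends $|z|\to e^{\pm b/2}$ of $\mathcal{W}$ and hence descends to the involution $z\mapsto\lambda/z$ of $A$ (possible because $r_{1}r_{2}=1$). But such an $F=(\lambda/z,\,M(z,w))$ would force the fibre identifications $D(e^{it},\sqrt{1-\eta(t)})\to D(e^{-it},\sqrt{1-\eta(t)})$ to depend holomorphically on $z$, which fails precisely because of the non‑holomorphic twist $e^{i\log|z|^{2}}$. Hence $\operatorname{Aut}(\mathcal{W})=T$, as claimed.

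\textbf{Expected main obstacle.} Step 1 is soft. In Step 2 the technical points are: getting enough boundary regularity for complete fields even though $\mathcal{W}$ fails Condition~R (here one uses that automorphisms of a bounded domain carry cluster sets of boundary points into the boundary, plus the local boundary regularity of biholomorphisms at strongly pseudoconvex points), and the Fourier bookkeeping over the fibres. But the genuinely delicate step is Step 3 — handling automorphisms that do not a priori respect the fibration $\pi$, equivalently ruling out a biholomorphism that swaps the two ends of the worm; the whole argument is driven by the rigidity of the non‑holomorphic twisting term $e^{i\log|z|^{2}}$, which admits no holomorphic ``reflection''.
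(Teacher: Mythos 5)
Your route (compute the Lie algebra of complete holomorphic fields, then eliminate extra components via normality of the rotation circle) is genuinely different from the paper's, which classifies the local CR automorphisms of the strongly pseudoconvex part of $\partial\mathcal{W}_{\mathrm{un}}$ by Segre-variety methods, keeps only the single-valued ones via logarithmic monodromy, and then combines Bell's local Condition~R extension with connectedness of the core and non-sphericity of $\mathcal{D}_{\pm}$. However, Step~2 contains a genuine gap: the tangency identity restricted to the part of $\partial\mathcal{W}$ lying over $\{|t|<\mu\}$, where $\eta\equiv\eta'\equiv 0$, cannot force $b\equiv 0$ and $a=\lambda z$ with $\lambda\in i\mathbb{R}$. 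The entire field $X=z\,\partial_z+2iw\,\partial_w$, the generator of $(z,w)\mapsto(e^{s}z,e^{2is}w)$, has a nonzero $\partial_w$-component and $\lambda=1\notin i\mathbb{R}$, yet satisfies your tangency equation wherever $\eta'=0$: with $u=w-e^{it}$ and $|u|=1$ one gets $\operatorname{Re}\bigl(2\operatorname{Im}(\bar u e^{it})+2iw\bar u\bigr)=2\operatorname{Im}(\bar u e^{it})+\operatorname{Re}\bigl(2i|u|^{2}+2ie^{it}\bar u\bigr)=0$. Indeed $X$ generates automorphisms of the \emph{unbounded} worm, so the ``non-holomorphic twist'' alone can never rule it out; over $\{|t|<\mu\}$ the bounded and unbounded worms have the same boundary. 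Thus your constraints yield an algebra of dimension at least $2$, and to cut it down to $\mathbb{R}\cdot iz\,\partial_z$ you must use the region where $\eta'(t)\neq 0$ (the bending toward the caps) — exactly the ingredient the paper uses when it forces $\alpha=0$ in its lemma on the automorphisms of the core, via the positivity of $\eta(\alpha+\log|z|^{2})$ near $\log|z|^{2}=\pm\mu$.

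Step~3 is also thinner than it reads. In the commuting case equivariance does give $F=(zh(w),g(w))$, but such a map is not fibre-preserving (the image of the fibre over $z$ is not a fibre), so the claim that $g$ is a Möbius map fixing a ``three-parameter'' family of discs is unsupported: the family of fibres is one-parameter, and no Möbius structure for $g$ has been established; the anti-commuting case likewise asserts rather than proves that $F$ descends to $z\mapsto\lambda/z$ and that the resulting fibre identification would have to depend holomorphically on $z$. Finally, the boundary regularity of complete fields that you flag is genuinely needed: it must be extracted from Bell's local Condition~R extension at strongly pseudoconvex boundary points together with smooth dependence of the flow on the parameter (the same tool the paper invokes for automorphisms), so it is a step of the proof, not a remark. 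As it stands, the proposal's conclusion is correct but its central computation (Step~2) does not establish it.
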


While the result is widely accepted, several researchers have noted difficulties in reproducing certain parts of the original argument. Motivated by this, we provide an alternative proof of the theorem above in this paper, which may also be of independent geometric interest. In particular, this new approach yields the following additional result:
\begin{corollary} \label{locsph1}
    The boundary of $\mathcal{W}$ is locally spherical except at the exceptional locus and the caps.
\end{corollary}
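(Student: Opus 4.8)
\emph{Proof strategy.} The plan is to read off local sphericity from an explicit change of coordinates on the ``winding'' part of $\partial\mathcal{W}$, and to identify the two excluded sets as precisely the places where this breaks down. First I would recall the decomposition of the boundary: over the annulus $|\log|z|^2|<\mu$ the bump vanishes, $\eta\equiv 0$, so there $\partial\mathcal{W}=\{\,|w-e^{i\log|z|^2}|^2=1\,\}$ (the core), while over $|\log|z|^2|\ge\mu$ the term $\eta(\log|z|^2)$ becomes active and the worm closes off (the caps); and inside the core sits the exceptional annulus $A=\{(z,0):|\log|z|^2|\le\mu\}$, which lies in $\partial\mathcal{W}$ because $\rho(z,0)=\eta(\log|z|^2)$. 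On $A$ the boundary contains an open piece of the complex line $\{w=0\}$, so its Levi form degenerates there and $\partial\mathcal{W}$ cannot be locally CR-equivalent to the strictly pseudoconvex sphere; this, together with the caps, is the exceptional set, and it remains to prove sphericity at an arbitrary $p_0=(z_0,w_0)\in\partial\mathcal{W}$ with $|\log|z_0|^2|<\mu$ and $w_0\ne 0$.

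Near $p_0$ I would rewrite the defining equation $|w-e^{i\log|z|^2}|^2=1$: dividing by $|w|^2$ and using $|e^{i\log|z|^2}|=1$ turns it into $2\operatorname{Re}\bigl(e^{i\log|z|^2}/w\bigr)=1$, with $\mathcal{W}$ lying on the side $>1$. Fixing a branch of $\log z$ near $z_0$, put
\[
\psi:=e^{i\log z}=z^{\,i},\qquad \mu_1:=\frac{z^{\,2i}}{w}.
\]
Then $\partial\psi/\partial w=0$ and the Jacobian determinant $(iz^{\,i-1})(-z^{\,2i}/w^{2})$ of $(z,w)\mapsto(\psi,\mu_1)$ is nonzero near $p_0$, so this map is a biholomorphism of a neighborhood of $p_0$ onto an open set. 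Since $e^{i\log|z|^2}=e^{i\log z}\,e^{i\log\bar z}=\psi/\bar\psi$ and $z^{\,2i}=\psi^{2}$, multiplying the boundary equation by $|\psi|^{2}>0$ converts it into
\[
2\operatorname{Re}\mu_1=|\psi|^{2},
\]
so the biholomorphism carries $\partial\mathcal{W}$ onto an open piece of the boundary of the Siegel half-space $\{\,2\operatorname{Re}\mu_1>|\psi|^{2}\,\}$ (and $\mathcal{W}$ onto the Siegel side). Composing with the Cayley transform sends that boundary onto an open piece of $S^{3}$, which gives local sphericity at $p_0$, and hence the corollary.

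The computation is short; the only care needed is in checking that $(z,w)\mapsto(z^{\,i},z^{\,2i}/w)$ is a genuine biholomorphism with the correct orientation and that every core point with $w\ne 0$ is reached by this argument. I would not try to show the caps fail to be spherical, only that the present method does not reach them: running the same substitution over the region where $0<\eta<1$ produces the boundary equation $2\operatorname{Re}\mu_1=|\psi|^{2}+\eta(2\arg\psi)\,|\mu_1|^{2}/|\psi|^{2}$, and the extra term obstructs the identification above. Deciding whether the Cartan curvature invariant actually vanishes on the caps for an admissible bump $\eta$ --- which it does not in general --- is precisely the laborious point I would set aside, and is why the caps are excluded in the statement.
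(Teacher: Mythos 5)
Your proof is correct, and it reaches the stated conclusion by a more direct route than the paper. The paper deduces this corollary from \Cref{locsph}, whose proof rests on the heavy computation of \Cref{autisobun}: one first shows $\dim_{\mathbb{R}}\mathfrak{hol}(\partial\mathcal{W}_{\mathrm{un}}\setminus\mathcal{A}_{\mathrm{un}},(1,2))=5$, concludes sphericity from the Chern--Moser dimension characterization, and only then records the explicit map $(z,w)\mapsto\bigl(i-iz^{i},\,i-2iz^{i}+2iz^{2i}/w\bigr)$ into the Heisenberg hypersurface as a verification. You instead construct the equivalence by hand: rewriting the core equation as $2\operatorname{Re}\bigl(e^{i\log|z|^2}/w\bigr)=1$ and passing to $(\psi,\mu_1)=(z^{i},z^{2i}/w)$ turns it into the Siegel boundary $2\operatorname{Re}\mu_1=|\psi|^2$, and your Jacobian check and the identity $e^{i\log|z|^2}=\psi/\bar\psi$ are exactly what make this a genuine local biholomorphism; note that your map agrees with the paper's up to an affine normalization (the paper's map sends $(1,2)$ to the origin of $\mathbb{H}$ and satisfies $\operatorname{Im}W=|Z|^2 \Leftrightarrow 2\operatorname{Re}\mu_1=|\psi|^2$). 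What your route buys is independence from Section~3: no Segre-variety or Lie-algebra machinery is needed to get sphericity of $\partial\mathcal{W}\setminus(\mathcal{C}_{\pm}\cup\mathcal{A})$, since near a point with $|\log|z_0|^2|<\mu$ and $w_0\neq0$ the bump $\eta$ vanishes identically on a neighborhood, so the local defining function is exactly that of $\partial\mathcal{W}_{\mathrm{un}}$. What the paper's route buys, and yours does not attempt, is the classification of all local CR automorphisms, which is what the rest of the paper actually needs; your closing remarks correctly observe that non-sphericity at $\mathcal{A}$ (Levi degeneracy) and the behavior at the caps are not required for this statement, so leaving them aside creates no gap.
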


The above result, together with the explicit local biholomorphism between the worm and the sphere given in Section 3, determines, in principle, the structure of the automorphism group of the worm. The computation of these automorphisms, however, is of independent geometric interest, and the resulting formulas are not immediately obvious. For this reason, an explicit computation is included.

This article is organized as follows:

Section 2 collects key definitions and preliminary results that will be used in the subsequent sections.

Section 3 considers the unbounded worm domain. In particular, it provides a classification of the CR automorphisms of the strongly pseudoconvex part of its boundary, as well as a proof of its local sphericity near all strongly pseudoconvex points.

Section 4 contains the proof concerning the automorphisms of the Diederich Fornæss worm domain.

\section{Definitions and results}
In this section, we introduce several key definitions and preliminary results that will be used for the analysis of the worm domain.

\subsection{Segre Varieties} 
We begin with the key notion of Segre varieties, which are important tools for the finite jet determination of mappings. More precisely:
\begin{definition}
    Let $M \subset \mathbb{C}^N$ be a real-analytic hypersurface with local defining function $\rho$ and $p \in M.$ The (first iterated) Segre variety of $p$ is defined as
$$\mathcal{S}_p(U)=\mathcal{S}^1_p(U)= \left\{ z \in U: \rho(z, \bar{p})=0 \right\}.$$
The iterated Segre varieties are defined inductively as    $$\mathcal{S}^k_p(U)= \bigcup_{q \in \mathcal{S}_p^{k-1}(U) } \mathcal{S}_q(U).$$
\end{definition}

It is typical to omit the neighborhood $U$ from the notation and assume that the statements hold in a neighborhood around the point $p.$

The Segre varieties capture higher-order reflection, and play a crucial role in finite jet determination and rigidity results; see e.g. \cite{BER1999},\cite{ebenfelt2001finitejetdeterminationlocal}.

\subsection{Automorphism group of a real-analytic hypersurface}
We fix the following notation:
\begin{definition}
    For a real-analytic hypersurface $M \subset \mathbb{C}^N$ we define the set of CR automorphisms at a point $p \in M$ as
    \begin{align*}
        \mathrm{Aut}(M,p)=\left\{f: (\mathbb{C}^N,p) \to \mathbb{C}^N, f(M) \subset M, f \text{ holomorphic, }  |f'(p)| \neq 0 \right\},
    \end{align*}
    and the stability or isotropy group of $M$ at $p\in M$ as
    \begin{align*}
        \mathrm{Aut}_p(M,p)=\left\{ f \in \mathrm{Aut}(M,p): f(p)=p \right\}.
    \end{align*}
\end{definition}
Closely related to the automorphisms of a real-analytic hypersurface are the infinitesimal CR automorphisms, namely: 
\begin{definition}
    If $M$ is a real-analytic hypersurface of $\mathbb{C}^N,$ the set of infinitesimal CR automorphisms of $M$ is defined as
$$\mathfrak{hol}(M,p)=\left\{ X=\sum_j a_j(z) \frac{\partial}{\partial z_j}: a_j \in \mathcal{O}_p, \operatorname{Re} X \text{ tangent to } M \right\},$$
that is, the set of holomorphic vector fields whose real part is tangent to $M$ near $p.$ We also define the subalgebra $\mathfrak{hol}_0(M,p) \subset \mathfrak{hol}(M,p)$ as
\begin{align*}
    \mathfrak{hol}_0(M,p)=\left\{ X \in \mathfrak{hol}(M,p): X(p)=0 \right\}
\end{align*}
\end{definition}

The above sets are key tools for the finite jet determination of maps since the following holds:

\begin{proposition} \label{flowsinfi}  Let $M \subset \mathbb{C}^N$ be a hypersurface. 
\begin{itemize}
    \item[(i)] The flows $\Phi_t^X(Z)$ of $X \in \mathfrak{hol}(M, p)$ satisfy $\Phi_t^X(Z) \in \mathrm{Aut}(M, p)$ for $t \in \mathbb{R}$ in a small neighborhood of $0.$
    \item[(ii)] Let $\mathrm{Aut}(M,p)$ be a finite dimensional Lie group. Then $\mathfrak{hol}(M,p)$ is the Lie algebra of $\mathrm{Aut}(M,p).$
\end{itemize}
\end{proposition}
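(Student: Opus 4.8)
The statement is classical; see e.g.\ \cite{BER1999}. I would argue the two parts as follows.

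For part (i), write $X=\sum_j a_j(z)\,\partial/\partial z_j$ with $a_j\in\mathcal{O}_p$, and consider the holomorphic ODE $\dot Z=a(Z)$, $a=(a_1,\dots,a_N)$, with initial value $Z(0)=Z_0$ ranging over a small polydisc centered at $p$. Since the right-hand side is holomorphic, the existence/uniqueness theorem with holomorphic dependence on initial data produces a solution $\Phi^X_t(Z_0)$ that is holomorphic in $Z_0$, real-analytic in $t$, and equal to the identity at $t=0$; hence for $|t|$ small, $\Phi^X_t$ is a biholomorphism from a neighborhood of $p$ onto a neighborhood of $\Phi^X_t(p)$, with inverse $\Phi^X_{-t}$. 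Restricted to real $t$ this family is, up to a harmless rescaling of the parameter $t$, the flow of the real vector field $\operatorname{Re}X$, which by hypothesis is tangent to $M$ near $p$; the elementary fact that a submanifold is invariant under the flow of a vector field tangent to it then gives $\Phi^X_t(M)\subset M$ as germs at $p$ for $|t|$ small. Finally $\lvert(\Phi^X_0)'(p)\rvert=1$, so by continuity $\lvert(\Phi^X_t)'(p)\rvert\neq0$ for $t$ in a small neighborhood of $0$, and therefore $\Phi^X_t\in\mathrm{Aut}(M,p)$.

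For part (ii), set $G=\mathrm{Aut}(M,p)$ and let $\mathfrak g$ be its Lie algebra. I would exhibit mutually inverse linear maps between $\mathfrak g$ and $\mathfrak{hol}(M,p)$ and then check compatibility of brackets. Given $\xi\in\mathfrak g$, the one-parameter subgroup $t\mapsto\exp(t\xi)$ is a real-analytic family of germs at $p$ of biholomorphisms preserving $M$; differentiating in $t$ at $0$ yields a holomorphic vector field $X_\xi$ whose real part is tangent to $M$, i.e.\ $X_\xi\in\mathfrak{hol}(M,p)$. Conversely, part (i) assigns to $X\in\mathfrak{hol}(M,p)$ the one-parameter subgroup $t\mapsto\Phi^X_t$ of $G$, whose infinitesimal generator lies in $\mathfrak g$; uniqueness of one-parameter subgroups with a prescribed initial velocity shows these two assignments are inverse to one another, so $\mathfrak g\cong\mathfrak{hol}(M,p)$ as vector spaces (in particular $\mathfrak{hol}(M,p)$ is finite-dimensional). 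That the identification respects brackets follows from the identity $[\operatorname{Re}X,\operatorname{Re}Y]=\operatorname{Re}[X,Y]$, valid for holomorphic vector fields $X,Y$ because the mixed brackets $[X,\overline{Y}]$ and $[\overline{X},Y]$ vanish when the coefficients are holomorphic; this also shows $\mathfrak{hol}(M,p)$ is closed under the Lie bracket.

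The step I expect to be the main obstacle is not the ODE theory or the bracket bookkeeping above, but making the identification in (ii) genuinely rigorous: one must know that the abstract Lie group structure assumed on $\mathrm{Aut}(M,p)$ is compatible with its concrete realization by germs at $p$ of biholomorphic maps, equivalently that the evaluation map $G\times(\mathbb{C}^N,p)\to\mathbb{C}^N$ is real-analytic in the group variable. Granting this — which is in fact part of the construction showing $\mathrm{Aut}(M,p)$ is a Lie group — the differentiation $\tfrac{d}{dt}\big|_{0}\exp(t\xi)$ does land in the space of holomorphic vector fields, and the rest is routine.
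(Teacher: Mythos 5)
The paper does not actually prove this proposition: it is stated in Section 2 as a known background fact (of the type found in \cite{BER1999}), so there is no internal argument to compare yours against. Your sketch is the standard one and is correct in substance: for (i), the holomorphic ODE $\dot Z=a(Z)$ gives a flow holomorphic in $Z$, equal to the identity at $t=0$, and (up to the factor-of-two rescaling you mention) it is the flow of $\operatorname{Re}X=X+\bar X$, so tangency of $\operatorname{Re}X$ to $M$ gives local invariance of $M$ and continuity of the Jacobian gives $\lvert(\Phi_t^X)'(p)\rvert\neq 0$ for small $t$; for (ii), the correspondence via one-parameter subgroups together with $[\operatorname{Re}X,\operatorname{Re}Y]=\operatorname{Re}[X,Y]$ (which holds because $[X,\overline Y]=0$ for holomorphic coefficients) is the right mechanism. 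Two points deserve to be made explicit if this were written out. First, $\mathrm{Aut}(M,p)$ as defined in the paper consists of germs at $p$ that need not fix $p$, so composition is only partially defined; strictly speaking one is dealing with a local group (or one should work with the isotropy group $\mathrm{Aut}_p(M,p)$), and the hypothesis ``finite dimensional Lie group'' in (ii) has to be read in that local sense. Second, the compatibility you flag at the end --- smooth/real-analytic dependence of the germ on the group parameter, so that $\frac{d}{dt}\big|_{t=0}\exp(t\xi)$ is a holomorphic vector field --- is indeed the only nontrivial input; in the CR setting it is supplied by jet-parametrization or finite-determination theorems (as in \cite{BER1999}, \cite{ebenfelt2001finitejetdeterminationlocal}), and granting it your two assignments are mutually inverse and bracket-preserving, which is exactly what the proposition asserts.
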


\subsection{Basic Properties of the Worm Domain}
This subsection contains some analytic and geometric properties of the worm, as well as a simplified version of the domain that is relevant for the main results of this paper. We begin with the following definitions:

\begin{definition}
    A pseudoconvex domain $\Omega$ is said to have a Stein neighborhood basis if there exists a sequence of smoothly bounded, pseudoconvex domains $U_1 \supset \supset U_2 \supset \supset \cdots \overline{\Omega}$ such that $\bigcap_{j=1}^{\infty}U_j=\overline{\Omega}.$ A domain failing this property is said to have a nontrivial Nebenhülle.
\end{definition}

\begin{definition}
    Let $\Omega \subset \mathbb{C}^N$ be a bounded domain. We say that $\Omega$ satisfies Condition $\mathrm{R}$ if the Bergman projection $P$ maps smooth functions on $\overline{\Omega}$ to smooth functions on $\overline{\Omega},$ that is, $P: \mathrm{C}^{\infty}(\overline{\Omega}) \to  \mathrm{C}^{\infty}(\overline{\Omega}).$
\end{definition}

\begin{proposition} The worm domain $\mathcal{W}$ satisfies the following properties:
    \begin{itemize}
    \item[(i)] $\mathcal{W}$ is a smoothly bounded, pseudoconvex domain.
    \item[(ii)] Its boundary is strongly pseudoconvex except along the annulus
    $$\mathcal{A} = \left\{ (z, 0) \in \partial \mathcal{W} : \left| \log |z|^2 \right| \leq \mu \right\} \subset \partial \mathcal{W}.$$
    \item[(iii)] It does not admit a Stein neighborhood basis; equivalently it has a nontrivial Nebenhülle.
    \item[(iv)] There exists no global plurisubharmonic defining function.
    \item[(v)] The worm fails to satisfy Condition $\mathrm{R}$.
\end{itemize}
\end{proposition}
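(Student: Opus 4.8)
The plan is to establish (i) and (ii) by a direct computation with the natural defining function, to recover (iii) from the Diederich--Fornæss continuity-principle argument, to deduce (iv) from (iii), and to cite the analytic literature for (v).

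For (i) and (ii) I would set $r=\log|z|^{2}=\log(z\bar z)$ and work with
\[
\rho(z,w)=\bigl|w-e^{ir}\bigr|^{2}-1+\eta(r)=|w|^{2}-2\operatorname{Re}\bigl(\bar w\,e^{ir}\bigr)+\eta(r).
\]
First observe that on $\overline{\mathcal W}$ one must have $1-\eta(r)\geq 0$, so by property (iii) of the bump function $\eta$ the number $r$ — hence $|z|$ — stays bounded and bounded away from $0$; in particular $z\neq 0$ holds automatically, the domain is fibred over the annulus $\{\eta(\log|z|^{2})<1\}$ in the $z$-plane by the nonempty disc fibres $\{|w-e^{ir}|^{2}<1-\eta(r)\}$ (so $\mathcal W$ is connected), and $\rho$ is a genuine smooth defining function near $\partial\mathcal W$. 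A short computation gives $\rho_{\bar w}=w-e^{ir}$ and $\rho_{z}=\tfrac1z\bigl(2\operatorname{Im}(\bar w e^{ir})+\eta'(r)\bigr)$; these cannot vanish simultaneously on $\{\rho=0\}$, since $\rho_{\bar w}=0$ forces $w=e^{ir}$, hence $\eta(r)=1$ on the boundary, hence $\eta'(r)\neq 0$ by property (iv) of $\eta$, hence $\rho_z\neq 0$. So $\mathcal W$ is smoothly bounded. For pseudoconvexity I would compute the complex Hessian of $\rho$ — the delicate term being, on $\partial\mathcal W$ (where $2\operatorname{Re}(\bar w e^{ir})=|w|^{2}+\eta(r)$),
\[
\rho_{z\bar z}=\tfrac{1}{|z|^{2}}\bigl(2\operatorname{Re}(\bar w e^{ir})+\eta''(r)\bigr)=\tfrac{1}{|z|^{2}}\bigl(|w|^{2}+\eta(r)+\eta''(r)\bigr),
\]
restrict the Hessian to the complex tangent space, and invoke $\eta\geq 0$ together with convexity $\eta''\geq 0$ to conclude that the resulting Levi form is positive semidefinite, i.e. that $\mathcal W$ is pseudoconvex. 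The same computation shows this form is positive definite except on the locus $\{w=0,\ \eta(r)=0\}$, where $\rho_{z\bar z}$ vanishes and the complex tangent space reduces to the $z$-axis; by property (ii) of $\eta$ this locus is precisely the annulus $\mathcal A$, which is (ii).

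Item (iii) is the Diederich--Fornæss theorem, and I would reproduce their continuity argument rather than reinvent it. The mechanism is that $\overline{\mathcal W}$ carries a one-parameter family of closed analytic annuli, lying essentially over the circles $\{|z|=e^{t/2}\}$ with $|t|\leq\mu$, whose bounding loops move because the centre $e^{ir}$ of the $w$-fibre winds along the unit circle as $r=\log|z|^{2}$ varies; feeding this family into the Kontinuitätssatz shows that every pseudoconvex open $\Omega\supset\overline{\mathcal W}$ must contain a fixed neighborhood of a boundary loop lying off $\overline{\mathcal W}$, so no decreasing sequence of pseudoconvex neighborhoods can shrink to $\overline{\mathcal W}$. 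Turning this winding into a quantitative obstruction — here property (iv) of $\eta$ is what supplies the ``twist'' at the ends — is the one genuinely hard ingredient of the proposition, and is where I expect the real difficulty; I would follow \cite{Diederich1977}. Granting (iii), statement (iv) is immediate: a global plurisubharmonic defining function $\rho$ would make the sublevel sets $\{\rho<\varepsilon\}$, $\varepsilon\downarrow 0$, a decreasing family of smoothly bounded pseudoconvex neighborhoods with intersection $\overline{\mathcal W}$, hence a Stein neighborhood basis, contradicting (iii).

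Finally, (v) is not elementary and I would simply cite it: Condition $\mathrm R$ fails for $\mathcal W$ by the theorem of Christ \cite{christ1995globalcnfirregularitybarpartialneumann}, who in fact proves the stronger statement that the $\bar\partial$-Neumann operator on $\mathcal W$ is not globally regular. In summary, every part of the proposition reduces to a direct computation or to a citation, except (iii), which carries the essential difficulty.
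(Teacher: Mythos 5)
Your route is genuinely different from the paper's: the paper disposes of (i)--(iv) by citing \cite{krantz2007analysisgeometrywormdomains} and of (v) by citing \cite{christ1995globalcnfirregularitybarpartialneumann}, whereas you prove (i)--(ii) directly from the defining function, sketch (iii) following \cite{Diederich1977}, deduce (iv) from (iii), and cite Christ for (v). What you gain is a self-contained treatment of the elementary parts: your formulas $\rho_{\bar w}=w-e^{ir}$, $\rho_z=z^{-1}\bigl(2\operatorname{Im}(\bar w e^{ir})+\eta'(r)\bigr)$, $\rho_{z\bar z}=|z|^{-2}\bigl(2\operatorname{Re}(\bar w e^{ir})+\eta''(r)\bigr)$ are correct, the boundedness/nonvanishing-gradient argument via properties (iii) and (iv) of $\eta$ is exactly right, the identification of the degenerate locus with $\mathcal{A}$ (where $\rho_z=\rho_{z\bar z}=0$ and the complex tangent is the $z$-direction) is right, and your (v) matches the paper. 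Since (iii) is the genuinely hard ingredient for both you and the paper, nothing is lost by deferring to Diederich--Forn\ae ss there.

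Two points need care before this replaces a citation. First, the pseudoconvexity step is thinner than your wording suggests: on $\partial\mathcal{W}$ the Levi form restricted to the complex tangent space is, up to the positive factor $|z|^{-2}$, equal to $\bigl(|w|^2+\eta+\eta''\bigr)(1-\eta)+\eta'\bigl(\eta'+2\operatorname{Im}(\bar w e^{ir})\bigr)$, and the signed cross term $2\eta'\operatorname{Im}(\bar w e^{ir})$ is not dominated termwise by ``$\eta\ge 0$ and $\eta''\ge 0$''; one must, e.g., write $w=e^{ir}\bigl(1+\sqrt{1-\eta}\,e^{i\phi}\bigr)$, minimize over $\phi$, and check $(2+\eta'')(1-\eta)+\eta'^2\ge 2\sqrt{1-\eta}\sqrt{(1-\eta)^2+\eta'^2}$, which holds precisely because $1-\eta\le 1$, with equality only when $\eta=\eta'=0$, $w=0$ (the case $\eta=1$, $\eta'=0$ being excluded by property (iv) of $\eta$). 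That computation is the actual content of (ii) and must be written out. Second, your (iii)$\Rightarrow$(iv) argument rules out a defining function plurisubharmonic on a full neighborhood of $\overline{\mathcal{W}}$ (and even there you should invoke Sard to choose regular values $\varepsilon_j$ and the local character of pseudoconvexity to see that the sublevel sets are smoothly bounded and pseudoconvex); if (iv) is meant in the stronger standard form for the worm --- no defining function plurisubharmonic on $\partial\mathcal{W}$ --- the implication from (iii) is not immediate, and the usual proof is a direct Levi-form/winding argument as in Diederich--Forn\ae ss and Krantz--Peloso (passing from boundary plurisubharmonicity to a Stein neighborhood basis requires the later Forn\ae ss--Herbig theorem, which you should not smuggle in silently).
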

\begin{proof}
A proof of $(i)-(iv)$ can be found e.g. in \cite{krantz2007analysisgeometrywormdomains} and $(v)$ is proven in \cite{christ1995globalcnfirregularitybarpartialneumann}.   
\end{proof}

In order to simplify the analysis of the worm domain, many authors have used modified versions. In this paper, we consider the following unbounded version:
\begin{align*}
    \mathcal{W}_{\mathrm{un}}=\left\{ (z,w) \in \mathbb{C}^2: \left|w-e^{i \log{|z|^2}} \right|^2 <1 ,\text{ } z \neq 0 \right\}.
\end{align*}
It is easy to see that $\mathcal{W}_{\mathrm{un}}$ is an unbounded, pseudoconvex domain. Its boundary is strongly pseudoconvex except along the annulus $\mathcal{A}_{\mathrm{un}}=\left\{ (z,0) \in \partial \mathcal{W}_{\mathrm{un}} \right\}.$ 

As a first step towards finding the automorphisms of the worm domain, we will work with the above simplified version and then use the results for $\mathcal{W}_{\mathrm{un}}$ to determine the automorphisms of the (bounded) worm domain $\mathcal{W}.$

\subsection{Boundary behavior of holomorphic functions on pseudoconvex domains}

Since the worm does not satisfy Condition $\mathrm{R}$, we need to rely on relevant local results. For the next theorem, we need the following local version of Condition $\mathrm{R}$ on smoothly bounded domains $\Omega \subset \mathbb{C}^N$:
\begin{definition}
    Let $\Omega \subset \mathbb{C}^N$ be a smoothly bounded pseudoconvex domain, and $p \in \partial \Omega.$ We say that $\Omega$ satisfies Local Condition $\mathrm{R}$ at $p$ if the Bergman projection associated to $\Omega$ maps smooth functions on $\overline{\Omega}$ into the space of functions in $L^2(\Omega)$ that are smooth up to the boundary in a neighborhood $U$ of $p,$ that is, $P: C^{\infty}(\overline{\Omega}) \to L^2(\Omega) \cap C^{\infty}(\overline{\Omega} \cap U)$
\end{definition}
The Local Condition $\mathrm{R}$ gives a local criterion for the smooth extension of holomorphic functions between smoothly bounded pseudoconvex domains in $\mathbb{C}^N$. In particular, by Bell \cite{bell1984local}:
\begin{theorem} \label{bellext}
    Let $f: \Omega_1 \to \Omega_2$ be a proper holomorphic mapping between smoothly bounded pseudoconvex domains in $\mathbb{C}^N.$ If $\Omega_1$ satisfies Local Condition $\mathrm{R}$ at a boundary point $p,$ then $f$ extends smoothly to $\partial \Omega_1$ near $p.$
\end{theorem}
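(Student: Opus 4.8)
The plan is to follow Bell's method, whose two engines are a transformation rule for the Bergman projections under the proper map $f$ and a lemma producing compactly supported test functions with prescribed holomorphic Bergman projections. Write $u=\det f'$ for the holomorphic Jacobian of $f$, and for a function $g$ on $\Omega_{2}$ set $\Lambda g=u\cdot(g\circ f)$. A change-of-variables computation—using that $f$ is a finite-sheeted branched covering away from its branch locus, and that the trace $(\Lambda^{*}h)(w)=\sum_{f(z)=w}h(z)/u(z)$ of a holomorphic $h$ extends holomorphically across the branch image—shows that $\Lambda$ intertwines the projections: $P_{1}\Lambda=\Lambda P_{2}$. The second ingredient is Bell's lemma: for any bounded domain with $C^{\infty}$ boundary and every multi-index $\alpha$, there is $\phi_{\alpha}\in C_{c}^{\infty}(\Omega_{2})$ with $P_{2}\phi_{\alpha}=z^{\alpha}$, the monomial in the coordinates of $\Omega_{2}$; this requires no regularity hypothesis on $\Omega_{2}$.

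With these in hand, fix $\alpha$ and apply the transformation rule to $g=\phi_{\alpha}$: since $P_{2}\phi_{\alpha}=z^{\alpha}$ we obtain $P_{1}\bigl(u\cdot(\phi_{\alpha}\circ f)\bigr)=u\cdot f^{\alpha}$, where $f^{\alpha}=f_{1}^{\alpha_{1}}\cdots f_{N}^{\alpha_{N}}$. Because $f$ is proper, $f^{-1}(\operatorname{supp}\phi_{\alpha})$ is a compact subset of $\Omega_{1}$, so $u\cdot(\phi_{\alpha}\circ f)\in C_{c}^{\infty}(\Omega_{1})\subset C^{\infty}(\overline{\Omega_{1}})$. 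Invoking Local Condition $\mathrm{R}$ at $p$, $P_{1}$ sends this function into $C^{\infty}(\overline{\Omega_{1}}\cap U)$ for some neighborhood $U$ of $p$; hence $u\cdot f^{\alpha}$ extends smoothly to $\partial\Omega_{1}$ near $p$ for every $\alpha$. Taking $\alpha=0$ gives that $u$ is smooth up to the boundary near $p$, and taking $\alpha=e_{j}$ gives the same for each product $u\,f_{j}$.

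What remains is to divide out the Jacobian, i.e. to pass from the smoothness of $u$ and of the $u\,f_{j}$ to the smoothness of the $f_{j}$ themselves near $p$. Since $f$ is a proper holomorphic map in equal dimensions, $u\not\equiv 0$, so $f_{j}=(u\,f_{j})/u$ off the branch locus $\{u=0\}$; the boundary behavior at points where $u$ vanishes is the crux of the proof and the step I expect to be the main obstacle. I would handle it as Bell does—exploiting that $u\,f^{\alpha}$ is smooth up to $\partial\Omega_{1}$ near $p$ for \emph{all} $\alpha$ while the $f_{j}$ are bounded holomorphic functions—to show that the branch locus cannot destroy boundary regularity, e.g. by studying the map $[\,u:u f_{1}:\dots:u f_{N}\,]$ together with a boundedness/removable-singularity argument that returns to the affine chart, thereby concluding that $f=(f_{1},\dots,f_{N})$ extends as a $C^{\infty}$ map to $\partial\Omega_{1}$ in a neighborhood of $p$. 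By contrast, the first two steps are essentially formal consequences of the intertwining relation and Bell's lemma.
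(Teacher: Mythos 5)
You should first note that the paper does not prove this statement at all: it is quoted verbatim from Bell's paper \cite{bell1984local}, so the only fair comparison is with Bell's own argument, and your outline is indeed the standard Bell strategy (transformation rule $P_1(u\cdot(g\circ f))=u\cdot((P_2g)\circ f)$, a lemma producing test functions whose projections are the monomials, then division by the Jacobian). The first genuine gap is in your middle step: the lemma you invoke, namely $\phi_\alpha\in C_c^\infty(\Omega_2)$ with $P_2\phi_\alpha=z^\alpha$ ``with no regularity hypothesis on $\Omega_2$,'' is not Bell's lemma and is not available in this generality. Exact compactly supported representatives exist for special symmetric (e.g.\ circular/Reinhardt) domains, where a radial averaging argument works, but in general Bell's lemma produces $\phi_\alpha\in C^\infty(\overline{\Omega_2})$ vanishing only to a prescribed \emph{finite} order $s$ on $\partial\Omega_2$, constructed from a smooth defining function -- so smoothness of $\partial\Omega_2$ is used. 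Consequently $u\cdot(\phi_\alpha\circ f)$ is not compactly supported, and to show it is smooth enough up to $\partial\Omega_1$ one needs the Hopf-type estimate $\mathrm{dist}(f(z),\partial\Omega_2)\le C\,\mathrm{dist}(z,\partial\Omega_1)^{\epsilon}$ (this is exactly where pseudoconvexity of $\Omega_2$, via the Diederich--Forn\ae ss exhaustion, enters) together with Cauchy estimates on the derivatives of $f$ and $u$; one also needs the Sobolev-estimate (closed graph) reformulation of (local) Condition $\mathrm{R}$, since the data is then only finitely smooth. The fact that your sketch never uses pseudoconvexity or boundary smoothness of $\Omega_2$ is the symptom that this shortcut cannot be correct as stated.

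The second gap is the one you yourself flag: passing from smoothness of $u$ and $u f^\alpha$ (all $\alpha$) up to $\partial\Omega_1$ near $p$ to smoothness of $f$ itself. This is the technical heart of Bell's (and Bell--Catlin's, Diederich--Forn\ae ss') proof, since $u$ may vanish, even to infinite order, at boundary points near $p$; ``projectivize and use a removable singularity argument'' is a gesture, not a proof, and the known arguments require a dedicated division lemma (bounded holomorphic quotients of functions smooth up to the boundary, handled by an induction on derivatives together with the same distance estimates as above). So the architecture of your proposal matches the cited source, but as written both the test-function lemma and the division step are unproved, and the first is stated in a form that is actually false in the claimed generality.
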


\section{Automorphisms of the boundary of the unbounded worm domain}
We begin now the analysis of the simplified version of the worm domain introduced in the previous section. The first step is to determine the CR automorphisms of the strictly pseudoconvex part of its boundary. In particular:
\begin{proposition} \label{autbun}
    Consider the strongly pseudoconvex points of the boundary of the unbounded worm domain
    \begin{align*}
        \partial \mathcal{W}_{\mathrm{un}}\setminus \mathcal{A}_{\mathrm{un}}=\left\{(z,w) \in \mathbb{C}^2: \left|w-e^{i \log|z|^2} \right|^2=1 ,w \neq 0 \right\}.
    \end{align*}
    The CR automorphisms of $\partial \mathcal{W}_{\mathrm{un}} \setminus \mathcal{A}_{\mathrm{un}}$ are given by  $H(z,w)=( e^{\alpha+ i \beta}z, e^{2i \alpha} w),$ for $\alpha, \beta \in \mathbb{R}.$ 
\end{proposition}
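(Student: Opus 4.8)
The plan is to describe the group $\operatorname{Aut}_{\mathrm{CR}}(M)$ of global CR automorphisms of $M:=\partial\mathcal W_{\mathrm{un}}\setminus\mathcal A_{\mathrm{un}}$ in two stages — first its Lie algebra, via an explicit determination of the holomorphic vector fields tangent to $M$, then its group of connected components. Throughout I use that $M$ is a connected, real-analytic, strongly pseudoconvex hypersurface; by the general theory of nondegenerate CR structures this means that $\operatorname{Aut}_{\mathrm{CR}}(M)$ is a Lie group whose Lie algebra consists of the complete infinitesimal CR automorphisms, that each such automorphism extends to a holomorphic vector field $X=a(z,w)\,\partial_z+b(z,w)\,\partial_w$ near $M$ with $\operatorname{Re}X$ tangent to $M$, and that every $F\in\operatorname{Aut}_{\mathrm{CR}}(M)$ extends holomorphically to a neighbourhood of each point of $M$.

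The computational heart of the proof is the identity $\mathfrak g=\mathbb R X_1\oplus\mathbb R X_2$, where $\mathfrak g$ is the real space of holomorphic vector fields tangent to $M$ and $X_1=z\,\partial_z+2iw\,\partial_w$, $X_2=iz\,\partial_z$. To get this, take the globally defined, real-analytic defining function $\rho=|w|^2-w\,e^{-i\log|z|^2}-\bar w\,e^{i\log|z|^2}$ on $(\mathbb C\setminus\{0\})\times\mathbb C$; tangency of $\operatorname{Re}X$ is the requirement $2\operatorname{Re}(X\rho)=c\,\rho$ for some real-analytic $c$. Writing $E:=e^{i\log|z|^2}$, the left-hand side is a degree-one Laurent polynomial in $E$, and comparing the coefficients of $E$, $1$, $E^{-1}$ — legitimate after the substitution $z=e^{-i\zeta}$, under which $E=e^{\zeta}e^{-\bar\zeta}$ factors off a holomorphic piece — gives, after simplification, $c\equiv 0$ and $b=iw\bigl(a/z+\overline{a/z}\bigr)$. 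Holomorphicity of $b$ then finishes it: $\partial_{\bar w}b=0$ forces $a=a(z)$, and $\partial_{\bar z}b=0$ forces $z\,a'(z)=a(z)$, i.e. $a(z)=\kappa z$ with $\kappa\in\mathbb C$, hence $b=2i(\operatorname{Re}\kappa)\,w$. The fields $X_1,X_2$ are complete, commute, and have flows $(z,w)\mapsto(e^{\alpha+i\beta}z,\,e^{2i\alpha}w)$; therefore $\operatorname{Aut}_{\mathrm{CR}}(M)^{\circ}=G:=\{(z,w)\mapsto(e^{\alpha+i\beta}z,\,e^{2i\alpha}w):\alpha,\beta\in\mathbb R\}$.

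It remains to rule out automorphisms outside $G$. Let $F\in\operatorname{Aut}_{\mathrm{CR}}(M)$ with holomorphic extension $\widehat F$; conjugation by $F$ preserves $\mathfrak g$ and is realized by $\widehat F_{*}$. As $G\cong\mathbb R\times S^1$, the circle subgroup $\{(e^{it}z,w)\}=\exp(tX_2)$ is its only $2\pi$-periodic one-parameter subgroup, so $\widehat F X_2\widehat F^{-1}=\pm X_2$; solving the resulting equivariance relation for $\widehat F$, and also imposing that $\widehat F$ conjugate $X_1$ into $\mathfrak g$, yields $\widehat F(z,w)=\bigl(\kappa_0 w^{n}z^{\pm 1},\,c_0 w^{\pm 1}\bigr)$ with $n\in\mathbb Z$. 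Finally the condition $F(M)=M$ — that the image of every circle $\{w:|w-e^{i\log|z|^2}|=1\}$ again lie on $M$ — kills the $z\mapsto 1/z$ branch outright (letting $w$ approach the puncture produces a contradiction) and, in the remaining branch, forces $n=0$ and $|c_0|=1$ with $c_0=e^{i\log|\kappa_0|^2}$. Writing $\kappa_0=e^{\alpha+i\beta}$ this reads $F(z,w)=(e^{\alpha+i\beta}z,\,e^{2i\alpha}w)$, so $F\in G$ and $\operatorname{Aut}_{\mathrm{CR}}(M)=G$.

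I expect the main obstacle to be the computation of $\mathfrak g$: because $e^{i\log|z|^2}$ is transcendental, one must argue with care that the three "$E$-harmonics" separate, and then exploit the tension between tangency and the holomorphicity of $a,b$. This is exactly where the global, rather than germwise, character of the statement enters — by the explicit local biholomorphism onto the sphere of Section 3, the germ of infinitesimal CR automorphisms at any point of $M$ is eight-dimensional, isomorphic to $\mathfrak{su}(2,1)$, and it is only the single-valuedness of the vector field on the punctured plane (no $z^{\pm i}$, no $\log z$) that cuts this down to the two-dimensional $\mathfrak g$. The component count of Step 2 is then comparatively routine, given that $F$ extends holomorphically.
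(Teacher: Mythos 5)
Your route is genuinely different from the paper's: the paper classifies the local CR automorphisms at $(1,2)$ via Segre varieties and jet parametrization (the explicit families $H_1,\dots,H_5$) and then tests each family for invariance under logarithmic monodromy, whereas you compute directly the algebra $\mathfrak{g}$ of globally single-valued holomorphic vector fields tangent to $M=\partial\mathcal{W}_{\mathrm{un}}\setminus\mathcal{A}_{\mathrm{un}}$ and then control arbitrary automorphisms by conjugation. If completed, this is leaner, and your algebra is the right one: with $E=e^{i\log|z|^2}$ and $u=2\operatorname{Re}(a/z)$ the tangency identity is $iu(wE^{-1}-\bar wE)+b\bar w+\bar bw-bE^{-1}-\bar bE=c\rho$, and comparing the coefficients of $E,1,E^{-1}$ does yield $c\equiv0$ and $b=iuw$, after which holomorphy forces $a=\kappa z$, $b=2i\operatorname{Re}(\kappa)\,w$. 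Note also that your second stage never actually uses the Lie-group structure of the full automorphism group (it applies verbatim to a single $F$, since the paper's notion of CR automorphism already carries a holomorphic extension), so the appeal to "general theory of nondegenerate CR structures" for this noncompact $M$ can and should be dropped.

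Two steps, however, are genuinely incomplete. First, the separation of $E$-harmonics: your justification ("after $z=e^{-i\zeta}$, $E=e^{\zeta}e^{-\bar\zeta}$ factors off a holomorphic piece") is not a proof. Near any fixed point, $E$ is itself a real-analytic function of $(z,\bar z)$ (holomorphic after complexification), so $1,E,E^{-1}$ enjoy no formal independence over the coefficient functions, and a pointwise or local comparison of coefficients is meaningless. What makes the separation legitimate is precisely the monodromy mechanism the paper exploits at the level of maps: complexify $\bar z\mapsto\chi$, continue the identity once around $\chi=0$ — this multiplies $E$ by $e^{2\pi}$ while leaving the single-valued coefficients $a,b,\bar a,\bar b$ unchanged — and use the resulting family of identities (handling the unknown factor $c$, e.g.\ by working on the complexified zero set of $\rho$). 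This is the computational heart of your proof and must be written out, not footnoted.

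Second, in the component step you pass from $g_z\equiv0$ and $zf_z=\pm f$ to $g=c_0w^{\pm1}$, $f=\kappa_0w^nz^{\pm1}$ with $n\in\mathbb{Z}$, i.e.\ you treat $g$ and the coefficient of $z^{\pm1}$ as single-valued functions of $w$ on a punctured disc and invoke integrality of exponents. That reduction is unjustified: a neighborhood of $M$ meets each slice $\{w=w_0\}$ in infinitely many components (annular neighborhoods of the circles $|z|=r_k$ with $e^{i\log r_k^2}$ one of the two admissible values), and these components are permuted nontrivially as $w_0$ winds around the origin — this is exactly the worm's spiral. Hence a holomorphic function near $M$ annihilated by $\partial_z$ is only leafwise a branch of $c\,w^{\lambda}$; single-valuedness near $M$ forces neither $\lambda\in\mathbb{Z}$ nor a common constant across leaves. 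The conclusion survives because the constraint $F(M)=M$, imposed along a fixed circle $\{z=z_0\}\cap M$ as $w$ approaches the puncture, eliminates nontrivial powers by the same oscillation argument you use to exclude $n\neq0$ and the $z\mapsto1/z$ branch; but the argument should be reorganized so that this final condition, rather than an implicit single-valuedness in $w$, does that work.
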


The setup for the proof is as follows. Let
\begin{align*}
    \rho(z,w)=\left|w- e^{i \log{|z|^2}}\right|^2-1=|w|^2-we^{-i \log{|z|^2}}-\bar{w}e^{i \log{|z|^2}}
\end{align*}
be a local defining function for $\mathcal{W}_{\mathrm{un}} \setminus \mathcal{A}_{\mathrm{un}}.$ Equivalently, in the complexified form, we have locally $\rho(z,w,\chi,\tau)=w \tau-w e^{-i \log{(z \chi)}} -\tau e^{i \log{(z \chi)}}.$
We aim to find a CR map $H=(f,g): \partial \mathcal{W}_{\mathrm{un}} \setminus \mathcal{A}_{\mathrm{un}}  \to \partial \mathcal{W}_{\mathrm{un}} \setminus \mathcal{A}_{\mathrm{un}}$ such that
\begin{dmath} \label{1}
    g(z,w) \bar{g}(\chi,\tau)-g(z,w) e^{-i \log{\left(f(z,w) \bar{f}(\chi,\tau)\right)}}-\bar{g}(\chi,\tau) e^{i \log{\left( f(z,w) \bar{f}(\chi,\tau)\right)}}=0
\end{dmath}
for all $(z,w,\chi,\tau) \in \mathbb{C}^4$ satisfying $w \tau-w e^{-i \log{(z \chi)}} -\tau e^{i \log{(z \chi)}}=0.$

We work locally near the point $(1,2) \in \partial \mathcal{W}_{\mathrm{un}}$ and start by determining all local CR automorphisms.
These maps can be classified into two types:
\begin{itemize}
    \item Isotropies $\mathrm{Aut}_{(1,2)}(\partial \mathcal{W}_{\mathrm{un}},(1,2))$: CR maps $H=(f,g)$ with $H(1,2)=(1,2).$
    \item Non-isotropies: point-moving CR maps $H=(f,g).$
\end{itemize}
For computational simplicity, we use two different approaches for the isotropies and non-isotropies. We begin by determining the local isotropies, or the stability group, of $\partial \mathcal{W}_{\mathfrak{un}}.$ For this, we need the following lemma:

\begin{lemma} \label{coniso}
    Let $H=(f,g)\in \mathrm{Aut}_{(1,2)}\left(\partial \mathcal{W}_{\mathrm{un}},(1,2)\right)$ be a non-constant map. Then it satisfies the following conditions:
    \begin{itemize}
    \item[(i)] $f_z(1,2) \neq 0$
    \item[(ii)] $g_w(1,2) \neq 0$
    \item[(iii)] $g_z(1,2)=0$
    \item[(iv)] $g_w(1,2)=\left|f_w(1,2)\right|^2 \in \mathbb{R}$
    \item[(v)] $g_{z^2}(1,2)=-4f_z(1,2)^2+4\left|f_z(1,2)\right|^2 $
    \item[(vi)] $g_{zw}(1,2)=-4f_w(1,2)f_z(1,2)+4\overline{f_w(1,2)}f_z(1,2)+2i|f_z(1,2)|^2 \left(1-f_z(1,2) \right)$
    \item[(vii)] $f_{z^2}(1,2)=\frac{8 f_z(1,2) \overline{f_w(1,2)}}{\overline{f_z(1,2)}}+(1-i) f_z(1,2)^2-(1-i) f_z(1,2)+4 f_w(1,2).$
    \end{itemize}
\end{lemma}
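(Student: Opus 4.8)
The plan is to get (i)--(iii) from the linear algebra of $dH$ at $(1,2)$ and (iv)--(vii) from a low-order Taylor expansion of the reflection identity \eqref{1}. First I would record the $1$-jet of $\rho$ at $(1,2)$: one computes $\rho_z(1,2)=0$, $\rho_w(1,2)=1$, so the complex tangent line of $\partial\mathcal{W}_{\mathrm{un}}$ at $(1,2)$ is $\mathbb{C}\,\partial_z$, while $\rho_{z\bar z}(1,2)=4>0$ records the strong pseudoconvexity of the point. Since $H=(f,g)\in\mathrm{Aut}_{(1,2)}(\partial\mathcal{W}_{\mathrm{un}},(1,2))$ is a local biholomorphism preserving the hypersurface and fixing $(1,2)$, its differential preserves this complex tangent line; writing $dH(\partial_z)=f_z(1,2)\,\partial_z+g_z(1,2)\,\partial_w$ forces $g_z(1,2)=0$, which is (iii). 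The Jacobian of $H$ at $(1,2)$ is then $f_z(1,2)g_w(1,2)-f_w(1,2)g_z(1,2)=f_z(1,2)g_w(1,2)$, and its non-vanishing gives (i) and (ii). So these three are essentially free.

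For (iv)--(vii) I would solve the defining equation $w\tau-we^{-i\log(z\chi)}-\tau e^{i\log(z\chi)}=0$ for $\tau$, obtaining the parametrization $\tau=T(z,w,\chi):=\dfrac{we^{-i\log(z\chi)}}{w-e^{i\log(z\chi)}}$ of the complexified hypersurface; solving for $\tau$ (rather than $w$) is convenient because it leaves $f(z,w)$ and $g(z,w)$ uncomposed, and it is their jet we are after. Substituting into \eqref{1} turns it into a holomorphic identity $F(z,w,\chi)\equiv 0$ near $(1,2,1)$, with $f(1,2)=1$ and $g(1,2)=2$. The computation is kept tractable by writing $e^{\pm i\log X}=X^{\pm i}$, so that differentiating a transcendental factor produces a factor $\pm i$ and a power of $X$; by noting that $\bar f(\chi,\tau),\bar g(\chi,\tau)$ enter only through $\chi$ and $T(z,w,\chi)$; and by recording the low-order jet of $T$ at $(1,2,1)$, namely $T=2$, $T_z=T_\chi=0$, $T_w=-1$, together with the handful of second derivatives of $T$ one needs.

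I would then read off the conditions order by order. The first-order coefficients of $F$ reproduce (iii) and, via the $w$-derivative --- the first place $g_w$ enters, since $T_w(1,2,1)\neq 0$ --- force $g_w(1,2)$ to be real; a mixed second-order coefficient encodes the Levi form at $(1,2)$ and expresses $g_w(1,2)$ as the modulus squared of a first derivative of $f$, which is (iv). The remaining second-order coefficients yield a small linear system whose solution, after substituting (iii)--(iv), gives (v) and (vi). Finally $f_{z^2}(1,2)$ does not appear in the second-order coefficients with its reference-point coefficient (that value is $\rho_z(1,2)=0$), so it surfaces only one order deeper, multiplied by a factor that is nonzero precisely by (i); solving for it and back-substituting (iii), (iv), (vi) produces (vii).

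The step I expect to be the main obstacle is exactly this last one, together with the bookkeeping leading to it: one must carefully propagate derivatives through the parametrization $T$, through the factors $(f\bar f)^{\pm i}$, and through the dependence of $\bar f,\bar g$ on $T$, while disentangling the coupled coefficient equations. A useful simplification is to set $\chi=1$ from the start: the slice $\chi=1$, $\tau=2$ is the reference Segre variety $\mathcal{S}_{(1,2)}$, which already yields (iii), and differentiating in the remaining variables supplies all the transversal information needed for (iv)--(vii).
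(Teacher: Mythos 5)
Your plan is essentially the paper's proof: the paper also verifies (iii)--(vii) by restricting the complexified invariance identity \eqref{1} to a parametrization of the complexified hypersurface (it solves the defining equation for $w$ rather than for $\tau$, which is an immaterial difference) and setting the low-order Taylor coefficients at the base point to zero, and it also obtains (i)--(ii) from the nonvanishing Jacobian once $g_z(1,2)=0$ is known; your derivation of (iii) from invariance of the complex tangent line $\mathbb{C}\,\partial_z$ is a slightly cleaner route to the same fact. One point you should not leave vague: carrying out your mixed second-order (Levi form) step actually yields $g_w(1,2)=|f_z(1,2)|^2$, not $|f_w(1,2)|^2$ as printed in item (iv) --- indeed the identity map has $g_w(1,2)=1$ and $f_w(1,2)=0$, so (iv) as stated cannot be correct, and your hedge ``the modulus squared of a first derivative of $f$'' glosses over this. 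Make the conclusion explicit ($|f_z(1,2)|^2$, consistent with $A(1,2)=|f_z(1,2)|^2$ where $\rho\circ H=A\rho$); this signals a typo in the statement rather than a defect in your method, and the remaining claims about which coefficients produce (v)--(vii) are consistent with the paper's use of the coefficients $B_{ljk}$, $l,j,k\le 2$.
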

\begin{proof}
Since $H=(f,g)\in \mathrm{Aut}_{(1,2)}\left(\partial \mathcal{W}_{\mathrm{un}},(1,2)\right)$, there exists a smooth function $A(z,w,\chi, \tau)$ such that, in the complexified form, 
\begin{align*}
        B(z,w,\chi,\tau)&:=g(z,w) \bar{g}(\chi,\tau)-g(z,w) e^{-i \log{\left(f(z,w) \bar{f}(\chi,\tau)\right)}}-\bar{g}(\chi,\tau) e^{i \log{\left(f(z,w) \bar{f}(\chi,\tau)\right)}}\\
        &=A(z,w,\chi,\tau) \rho(z,w,\chi,\tau).
\end{align*}
Substituting $w=\frac{2z^{2i}\chi^{2i}}{2 \chi^i  z^i+\chi ^{2 i}-2 \chi ^i}=:w_0,$ the right-hand side vanishes. Therefore, expanding the left-hand side as a power series in $(z, \chi, \tau)$ near $(1,1,2)$, all coefficients must vanish. Equivalently, if
\begin{dmath} \label{4} 
B(z,w_0,\chi,\tau)=\sum_{l,j,k=1}^{\infty}B_{ljk} (z-1)^l (\chi-1)^j (\tau-2)^k,
\end{dmath}
then all $B_{ljk}=0.$ In particular, the equations $B_{ljk}=0,$ for $l,j,k=0,1,2$ impose the conditions:
\begin{align*}
    g_z(1,2)=0, \text{ } g_w(1,2)=\left|f_w(1,2)\right|^2 \in \mathbb{R}.
\end{align*}

Since $H \in \mathrm{Aut}_{(1,2)}( \partial \mathcal{W}_{\mathfrak{un}},(1,2))$ is non-constant, the Jacobian matrix $$H'(1,2)=\begin{bmatrix}
f_z(1,2) & f_w(1,2) \\
g_z(1,2) & g_w(1,2)
\end{bmatrix}$$
must have nonzero determinant. Given that $g_w(1,2)= 0,$ it follows that $f_z(1,2)$ and $g_w(1,2)$ are both nonzero.

Assuming $f_z(1,2),g_w(1,2) \neq 0,$ the remaining equations $B_{ljk}=0,$ for $ l,j,k=0,1,2$ yield the remaining conditions (v)-(vii). This completes the proof.
\end{proof}
Using the above, we can now determine the local isotropies in a neighborhood of $(1,2) \in \partial \mathcal{W}_{\mathrm{un}}.$ For the proof, we use techniques found, e.g., in \cite{BER1999} and \cite{ebenfelt2001finitejetdeterminationlocal}.
\begin{lemma} \label{autisobun}
    The group of local isotropies $\mathrm{Aut}_{(1,2)}\left(\partial \mathcal{W}_{\mathrm{un}},(1,2)\right)$ is an explicitly computable Lie group of real dimension $5.$
\end{lemma}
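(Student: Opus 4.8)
Near $(1,2)$ the hypersurface $\partial\mathcal{W}_{\mathrm{un}}\setminus\mathcal{A}_{\mathrm{un}}$ is real-analytic and strongly pseudoconvex, hence Levi-nondegenerate, so the plan is to read off the isotropy group from a finite jet computation, continuing the expansion already started in Lemma \ref{coniso}. First I would invoke the finite jet determination theorem for Levi-nondegenerate real hypersurfaces in $\mathbb{C}^2$ (Chern--Moser theory; see \cite{BER1999}, \cite{ebenfelt2001finitejetdeterminationlocal}): every germ $H\in\mathrm{Aut}_{(1,2)}(\partial\mathcal{W}_{\mathrm{un}},(1,2))$ is uniquely determined by its $2$-jet at $(1,2)$. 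Consequently the assignment $H\mapsto j^2_{(1,2)}H$ is injective and $\mathrm{Aut}_{(1,2)}(\partial\mathcal{W}_{\mathrm{un}},(1,2))$ is a finite-dimensional Lie group (with Lie algebra $\mathfrak{hol}_0(\partial\mathcal{W}_{\mathrm{un}},(1,2))$ by Proposition \ref{flowsinfi}(ii)); it remains to pin down this $2$-jet and count its free parameters. One could instead compute $\mathfrak{hol}_0$ directly from the tangency condition $\operatorname{Re}(X\rho)|_{\rho=0}=0$ with $X(1,2)=0$, but the jet route is exactly what Lemma \ref{coniso} is set up for.

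Second, I would push the expansion $B(z,w_0,\chi,\tau)=\sum B_{ljk}(z-1)^l(\chi-1)^j(\tau-2)^k=0$ of Lemma \ref{coniso} to third order in $(z,\chi,\tau)$. Lemma \ref{coniso} already forces $g_z(1,2)=0$ and expresses the rest of the first-order jet of $g$, together with $g_{z^2}(1,2)$, $g_{zw}(1,2)$ and $f_{z^2}(1,2)$, in terms of the two complex parameters $f_z(1,2)\in\mathbb{C}^*$ and $f_w(1,2)\in\mathbb{C}$. Solving the next batch of equations $B_{ljk}=0$ recursively, the remaining second-order coefficients $f_{zw}(1,2)$, $f_{w^2}(1,2)$ and the imaginary part of $g_{w^2}(1,2)$ should become determined by $f_z(1,2)$, $f_w(1,2)$, leaving one further real parameter $r$ (the analogue of the Chern--Moser shift parameter, surfacing in the second-order jet of $g$). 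Combined with the $2$-jet determination from the first step, this shows $H$ is a function of the triple $(f_z(1,2),f_w(1,2),r)\in\mathbb{C}^*\times\mathbb{C}\times\mathbb{R}$, so $\dim_{\mathbb{R}}\mathrm{Aut}_{(1,2)}(\partial\mathcal{W}_{\mathrm{un}},(1,2))\le 5$.

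Third, to see that all five parameters are genuinely attained — hence that the dimension is exactly $5$ — I would compose the explicit local biholomorphism of Section 3, which carries a neighborhood of $(1,2)$ in $\partial\mathcal{W}_{\mathrm{un}}$ biholomorphically onto a neighborhood of a point of the unit sphere $S^3$ (equivalently the Heisenberg hypersurface), with the classical $5$-parameter isotropy group of $S^3$ at that point, and pull back. Each such conjugate is a germ of biholomorphism of $\mathbb{C}^2$ fixing $(1,2)$, preserving $\partial\mathcal{W}_{\mathrm{un}}$, with invertible differential, so this yields a $5$-dimensional subfamily of $\mathrm{Aut}_{(1,2)}(\partial\mathcal{W}_{\mathrm{un}},(1,2))$; together with the upper bound this forces equality. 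Since the local biholomorphism and the sphere isotropies (and, equivalently, the solution of the recursion in the second step) are given by explicit closed formulas, the resulting Lie group is explicitly computable; in fact it is isomorphic to the isotropy subgroup of $\mathrm{SU}(2,1)$, of real dimension $8-3=5$.

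The main obstacle is the bookkeeping in the second step: one must check that the over-determined system $\{B_{ljk}=0\}$ is \emph{consistent and closes up}, i.e. that once $(f_z(1,2),f_w(1,2),r)$ are fixed every higher-order Taylor coefficient of $f$ and $g$ is forced, and that no hidden relation collapses the five parameters to fewer. A priori the recursion could over-constrain (giving dimension $<5$, which would contradict local sphericity and the third step) or look under-constrained before the jet-determination theorem is applied; reconciling these is where the comparison with the sphere is most useful as a check. A secondary, purely technical, difficulty is carrying the substitution $w=w_0(z,\chi,\tau)$ and the ensuing multivariable Taylor expansion — with $\log$ and $\exp$ expanded — far enough to extract the third-order equations cleanly.
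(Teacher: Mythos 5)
Your strategy is viable but genuinely different from the paper's. The paper does not invoke Chern--Moser theory or the sphere at this stage: it runs a direct Segre-variety jet parametrization, in the spirit of \cite{BER1999} and \cite{Baouendi:1997tl}. Concretely, it applies the CR vector field $\bar L$ and the fields $S_1,S_2$ to the complexified equation, solves for $f,g$ and their first derivatives along $\mathcal{S}_{(1,2)}$ and then along $\mathcal{S}^2_{(1,2)}$, inverts the (degenerate) second Segre map by a desingularization in a parameter $t=p(z,w)/(z-1)^2$, and then imposes holomorphy of $f,g$ through the divisibility conditions $a_{jk}=b_{jk}=0$, $k<2j$, combined with the jet relations of \Cref{coniso}. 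This produces closed-form families $H_1,\dots,H_4$ whose parameters total real dimension $5$; the explicit formulas are not a by-product but are consumed later (the monodromy argument in \Cref{autbun} is run on these very formulas), and local sphericity (\Cref{locsph}) is in the paper a \emph{consequence} of this computation. Your route instead gets the upper bound from a jet recursion plus $2$-jet determination and the lower bound by conjugating the $5$-parameter sphere isotropy through the explicit local equivalence with the Heisenberg hypersurface; what it buys is a much lighter argument, at the cost of not producing the specific parametrizations the paper uses downstream.

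Two caveats to make your version rigorous. First, your step 2 is only asserted ("should become determined"), and you yourself flag the consistency of the overdetermined system as the main obstacle; as written this is the gap. It can be closed without any bookkeeping by citing the classical Chern--Moser fact that for a Levi-nondegenerate hypersurface in $\mathbb{C}^2$ the stability group injects, via the $2$-jet, into the ($5$-dimensional) isotropy group of the associated hyperquadric -- or, after conjugation, by the Poincar\'e--Alexander extension theorem for local CR maps of the sphere, which gives the isomorphism with the $\mathrm{SU}(2,1)$-isotropy directly and hence both bounds at once. Second, beware of circularity: the "explicit local biholomorphism of Section 3" that you pull back is, in the paper's logical order, \Cref{locsph}, which is deduced \emph{from} the proof of \Cref{autisobun}. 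Your argument is salvageable because that map, $(z,w)\mapsto\left(i-iz^i,\,i-2iz^i+\tfrac{2iz^{2i}}{w}\right)$, can be verified directly by substitution into the defining equations, independently of the lemma; but you must do that verification (or produce such a map yourself) rather than quote the corollary as an input.
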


\begin{proof}
As a first step, we will compute all CR maps $H=(f,g)$ satisfying (\ref{1}). Let
$$\bar{L}=\rho_{\tau} \frac{\partial}{\partial \chi}-\rho_{\chi} \frac{\partial}{\partial \tau}=\left(w-e^{i \log{(z \chi)}} \right)\frac{\partial}{\partial \chi}-\left(\frac{iw}{\chi} e^{-i \log(z \chi)}-\frac{i \tau}{\chi} e^{i \log{(z\chi)}} \right)\frac{\partial}{\partial \tau}$$
be the basis of CR vector fields for $\partial \mathcal{W}_{\mathrm{un}}$ locally around the point $(1,2).$
Applying $\bar{L}$ to both sides of (\ref{1}), we obtain a system of equations

\begin{dmath}\label{2}
\begin{cases}
% First equation, all in one line
g(z,w) \bar{g}(\chi,\tau)
  - g(z,w) e^{-i \log{\left(f(z,w) \bar{f}(\chi,\tau)\right)}}
  - \bar{g}(\chi,\tau) e^{i \log{\left(f(z,w) \bar{f}(\chi,\tau)\right)}} = 0 \\
% Second equation, first line left, second line indented
\left(g(z,w)-e^{i \log{\left(f(z,w) \bar{f}(\chi,\tau)\right)}} \right) \bar{L} \bar{g}(\chi,\tau)\\
  \hspace{1.3em}+ \left( i \frac{g(z,w)}{\bar{f}(\chi,\tau)} 
    e^{-i \log{\left(f(z,w) \bar{f}(\chi,\tau)\right)}}
  - i \frac{\bar{g}(\chi,\tau)}{\bar{f}(\chi,\tau)} 
    e^{i \log{\left(f(z,w) \bar{f}(\chi,\tau)\right)}} \right) \bar{L}\bar{f}(\chi,\tau) = 0
\end{cases}
\end{dmath}

We set $(\chi,\tau)=(1,2)$ and $(z,w)=\left(z, \frac{2 z^{2i}}{2 z^i-1} \right) \in \mathcal{S}_{(1,2)}.$ Since the point $(1,2)$ is $1-$nondegenerate, the system (\ref{2}) is solvable, and we obtain explicit formulas for the functions $f$ and $g$ along $\mathcal{S}_{(1,2)}.$

We also consider the vector fields
$$S_1= \frac{\partial}{\partial \chi}+ \left(\frac{2 i \frac{\tau}{\chi}  e^{2i \log{(z \chi)}}} {\tau  e^{i \log{(z \chi)}}-1}-\frac{i \frac{\tau^2}{\chi} e^{ 2i \log{(z \chi)}} }{\left(\tau e^{i \log{(z \chi)}}-1 \right)^2}\right) \frac{\partial}{\partial w}$$
and $$S_2=\frac{\partial}{\partial \tau}+ \frac{e^{2i \log{(z \chi)}}}{\left( \tau e^{i \log{(z \chi)}}-1\right)^2}  \frac{\partial}{\partial w}.$$
Applying $S_1$ and $S_2$ yields formulas for the first-order derivatives of $f$ and $g$ along $\mathcal{S}_{(1,2)}^1.$ Using the reflection principle, we also derive formulas for $\bar{f}$ and $\bar{g},$ and their derivatives along $\mathcal{S}_{(1,2)}^1.$ Similarly, we set in (\ref{2}) $$(z,w)=\left(z, \frac{2z^{2i}}{2 z^i \chi^i+ \chi^{2i}-2 \chi^i} \right) \text{} \text{ and} \text{ } (\chi,\tau)=\left(\chi, \frac{2}{-\chi^{2i}+2 \chi^{i}} \right),$$ using the convention $z^{ia}=e^{i a \log{z}}$ for $a \in \mathbb{R},$
and solve to obtain formulas for $f$ and $g$ along the second Segre variety $\mathcal{S}_{(1,2)}^2.$

Since the second Segre map $(z,\chi) \mapsto \left(z, \frac{2z^{2i}}{2 z^i \chi^i+ \chi^{2i}-2 \chi^i} \right) $ is generically of full rank, we can extend $f$ and $g$ to $\partial \mathcal{W}_{\mathrm{un}} \setminus \mathcal{A}_{\mathrm{un}}.$ To do so, we need to solve the equation $w=\frac{2z^{2i}}{2 z^i \chi^i+ \chi^{2i}-2 \chi^i}$ for $\chi.$
Writing 
\begin{align} \label{3}
    w= \frac{2 z^{2i}}{2 z^i} + \frac{4i (z^i-1) z^{2i}}{\left(2z-1 \right)^2} (\chi-1)+ O( \chi-1)^2,
\end{align}
we observe that the Implicit Function Theorem does not apply directly. Hence we use the proof of Proposition 2.11 in \cite{Baouendi:1997tl} and apply a desingularization method. Define new parameters:
\begin{align*}
    t:= \frac{w-\frac{2 z^{2i}}{2 z^i}}{\left(\frac{4i (z^i-1) z^{2i}}{\left(2 z^i-1 \right)^2} \right)^2 } \text{  and  } u:= \frac{\chi-1}{\left(\frac{4i (z^i-1) z^{2i}}{\left(2 z^i-1 \right)^2} \right)^2}.
\end{align*}
Then, equation $(\ref{3})$ becomes 
$t=u+O(u^2),$ and the Implicit Function Theorem applies. Thus, we write $u$ as a holomorphic function of $t$ and $z,$ $u:=\phi(z,t)$ as follows:
\begin{align*}
    u=-\frac{i \left(1-2 z^i\right)^2}{4z^{2i} \left(z^i-1 \right)} \left(\left(\frac{-8 t z^{4 i}+4 (t+2) z^{3 i}-4 (t+3) z^{2 i}+6 z^i-1}{8 (2 t+1) z^{3 i}-4 (2 t+3) z^{2 i}+6 z^i-1}\right)^{i}-1\right).
\end{align*}
Substituting $\chi=\phi(t,z)+1$ into the expressions for $f$ and $g$ along $\mathcal{S}_{(1,2)}^2,$ and applying \Cref{coniso}, we obtain explicit formulas for candidate functions $f$ and $g$ in terms of $t$ and $z.$ For simplicity, we continue to denote these functions as $f(z,t)$ and $g(z,t),$ and expand as power series:
\begin{align} \label{5}
    f(z,t)=\sum_{j=0}^{\infty}a_j(z)t^j \text{ } \text{ and } \text{ } g(z,t)=\sum_{j=0}^{\infty}b_j(z)t^j.
\end{align}
Since $$t= \frac{w-\frac{2 z^{2i}}{2 z^i}}{\left(\frac{4i (z^i-1) z^{2i}}{\left(2z^i-1 \right)^2} \right)^2 }=:\frac{p(z,w)}{(z-1)^2},$$ where $p$ is holomorphic near $(1,2),$ we require each $a_j(z)$ and $b_j(z)$ to be divisible by $(z-1)^{2j}.$ By the Weierstrass Division Theorem, we write:
\begin{align*}
    a_j(z)=(z-1)^{2j}p^a_j(z)+r^a_j(z) \text{ and } b_j(z)=(z-1)^{2j}p^b_j(z)+r^b_j(z),
\end{align*}
where the remainders $r_j^{\alpha}(z)$ and $r_j^{\beta}(z)$ are polynomials of degree less than $2j.$ To preserve holomorphicity of $f$ and $g,$ we must have $r^a_j(z)=r^b_j(z)=0$ for all $j.$ That is, writing $a_j(z)=\sum_{k=0}^{\infty} a_{jk} (z-1)^k$ and $b_j(z)=\sum_{k=0}^{\infty} b_{jk}(z-1)^k,$ we conclude $a_{jk}=b_{jk}=0,$ for all $ k=0, \cdots, 2j-1.$

In particular, for $j=3,$ we obtain 
\begin{dmath*}
    a_{3,0}=-\frac{1}{|f_z(1,2)|^2 f_z(1,2)} \left(4096 f_w(1,2) \left(f_w(1,2) \left(g_{w^2}(1,2)-|f_z(1,2)|^4 \right)\\ +(3 i+1) |f_z(1,2)|^2 f_w(1,2)^2-f_{w^2}(1,2) |f_z(1,2)|^2+4 f_w(1,2)^3\right) \right).
\end{dmath*}
This yields two possibilities: either $f_w(1,2)=0,$ or 
\begin{dmath*}
    g_{w^2}(1,2)= \frac{1}{f_w(1,2)} \left(-(1+3 i) |f_z(1,2)|^2 f_w(1,2)^2+ |f_z(1,2)|^2 f_w(1,2) f_z(1,2)^2 \\ +f_{w^2}(1,2) |f_z(1,2)|^2-4 f_w(1,2)^3 \right).
\end{dmath*}

We first consider the latter case, where $f_w(1,2) \neq 0,$ and use the condition for $g_{w^2}(1,2)$ to obtain holomorphic expressions for $f$ and $g.$ Expanding in powers of $z-1$ we write
\begin{align*}
        f(z,w)=\sum_{j=0}^{\infty}f_j(w) (z-1)^j \text{ and } g(z,w)=\sum_{j=0}^{\infty}g_j(w) (z-1)^j.
\end{align*}
Substituting into equation (\ref{4}) and imposing the vanishing of the coefficients $B_{ljk}=0$ for $l,j,k=1,2,$ we obtain
\begin{dmath*}
\overline{f_{zw}(1,2)}= \frac{1}{|f_w(1,2)|^2 f_z(1,2)} \left(4 |f_w(1,2)|^4 +2 i |f_w(1,2)|^2 f_w(1,2)\\-2 i |f_z(1,2)|^2 f_w(1,2) f_w(1,2) +2 |f_z(1,2)|^2 |f_w(1,2)|^2 \\ +f_z(1,2) |f_w(1,2)|^2 \bar{f}_{w^2}(1,2) -f_{zw}(1,2) |f_w(1,2)|^2 \bar{f}_z(1,2)\\ +f_{w^2}(1,2) |f_z(1,2)|^2 \bar{f}_w(1,2)\right).
\end{dmath*}

Let us first consider the case $f_w(1,2)=:\xi+i \rho \neq 0$ and $\operatorname{Im}f_z(1,2)=:\nu \neq 0.$ Using the above conditions, we obtain the first parameterization for the automorphisms  $H_1(z,w)=\left(f_1(z,w),g_1(z,w) \right)$ as follows:
\begin{align*}
    f_1(z,w)=\left(\frac{N_{f_1}(z,w)}{D_{f_1}(z,w)} \right)^i \text{ } \text{ and } \text{ } g_1(z,w)=\frac{N_{g_1}(z,w)}{D_{g_1}(z,w)}
\end{align*}
where
\begin{dmath*}
    N_{f_1}(z,w)= w \left(\mu  (\xi +i \rho ) \left(-4 z^i (\xi+ \sigma -i( \rho + \varphi) )+\nu  \left(\xi+ 2 \sigma -i (2 \varphi+\rho) \right)\\
    +2 (2 z^i-1) (\rho +i \xi ) (2 \rho +\psi )\right)-2 i \mu ^2 \left(2 z^i-1\right) (\xi^2 + \rho^2  +\xi \sigma+\rho \varphi )\\
    +\nu ^2 \left( (4 z^i+2) (\xi  \varphi-\rho  \sigma ) +i (\xi ^2+ \rho ^2) \right)-4 i(2 z^i-1) (\xi ^2+\rho ^2)^2\\
    +2 \nu  (\xi ^2+\rho ^2) \left(2 z^i (\rho +\psi-i \xi )-\psi+2 i \xi \right)\right)+4 i z^{2 i} \left(\mu ^2 (\xi^2 + \rho^2+ \xi \sigma +\rho \varphi)\\
    -\mu \nu  (\xi +i \rho )(\rho + \varphi +i (\xi  + \sigma) ) -(\mu-i \nu)(\xi^2+ \rho^2 ) (2 \rho +\psi )+2 (\xi^2+ \rho^2)^2\\
    + i \nu^2 ( \xi \psi- \rho \sigma)\right)
\end{dmath*}
\begin{dmath*}
    D_{f_1}(z,w)=w \left(\mu ^2 \left( ( z^i-1) \left(-\nu (\xi^2+ \rho^2)+2i (\xi^2+ \rho^2+ \xi \sigma+ \rho \varphi) \right) \right)\\
    +\mu  (\xi +i \rho ) \left(\nu  \left( 4z^i( \xi + \sigma+ i (\rho+ \varphi))-2i (2z^i-1)( \xi^2+ \rho^2)+ \xi + 2 \sigma\\
    + i (\rho+ \varphi) \right)+2 \left(2 z^i-1\right) (\rho +i \xi ) (2 \rho +\psi )\right)-\left(\nu ^3 (z^i-2) (\xi ^2+\rho ^2)\right)\\
    +\nu ^2 \left(-4 z^i \left( (\xi^2+ \rho^2)( \xi+i \rho) -\xi  \varphi+\rho  \sigma \right)+ 2(\xi^2+ \rho^2)(2\xi+ i(2\rho+1))\\
    +2 ( \xi \varphi+ \rho \sigma) \right)+2 \nu  \left(\xi ^2+\rho ^2\right) \left(2 z^i (\rho +\psi-i \xi )+2 i \xi -\psi \right)\\
    -4 i \left(2 z^i-1\right) \left(\xi ^2+\rho ^2\right)^2\right)+4 z^{2 i} \left(i \mu ^2 (\xi^2+ \rho^2+ \xi \sigma + \rho \varphi )\\
    +\mu  (\xi +i \rho ) \left(\nu  \left(\xi+ \sigma+i ( \xi ^2 +\rho^2 - \rho - \varphi) \right)-i (\xi -i \rho ) (2 \rho +\psi )\right)\\
    +\nu ^2 \left((\xi^2+ \rho^2)( \xi+i \rho) +\rho  \sigma -\xi  \varphi \right)-\nu  \left(\xi ^2+\rho ^2\right) (2 \rho +\psi )+2 i \left(\xi ^2+\rho ^2\right)^2\right)
\end{dmath*}
\begin{dmath*}
    N_{g_1}(z,w)= -2 i \left(w \left(\mu ^2 \left(i \nu(z^i-1)( \xi^2+ \rho^2)-2(2z^i-1)( \xi^2+ \rho^2+ \varphi \rho+ \xi \sigma)  \right)\\
    +\mu  (\xi +i \rho ) \left(\nu  \left( (4z^i-1)( \xi+ i \rho)-2(2z^i-1)( \xi^2+ \rho^2- i \sigma-\varphi) \right)\\
    +2 \left(2 z^i-1\right) (\xi -i \rho ) (2 \rho +\psi )\right)+i \nu ^3 \left(z^i-1\right) \left(\xi ^2+\rho ^2\right)\\
    +\nu ^2 \left(2i (2z^i-1)\left( ( \xi^2+ \rho^2)( \xi+ i \rho)- \xi \varphi+ \rho \sigma \right)+ \xi^2+ \rho^2) \right)\\
    -2 \nu  \left(\xi ^2+\rho ^2\right) \left(2 z^i (\xi +i (\rho +\psi ))-2 \xi -i \psi \right)-4 \left(2 z^i-1\right) \left(\xi ^2+\rho ^2\right)^2\right)\\
    +4 z^{2 i} \left(\mu ^2 (\xi^2+ \rho^2+ \xi \sigma+ \rho \varphi)+\mu  (\xi +i \rho ) (\nu  (\xi^2+ \rho^2- \rho-\varphi -i (\xi+ \sigma)  )\\
    -(\xi -i \rho ) (2 \rho +\psi ))-i \nu ^2 \left( (\xi^2+ \rho^2) ( \xi+ i \rho)-\xi  \varphi+\rho  \sigma \right)+i \nu  \left(\xi ^2+\rho ^2\right) (2 \rho +\psi )+2 \left(\xi ^2+\rho ^2\right)^2\right)\right)^2
\end{dmath*}
\begin{dmath*}
    D_{g_1}(z,w)= \left(2 z^{2 i} \left(-i \left(\xi ^2+\rho ^2\right) \nu ^4-2 \left(2 ( \xi^2+ \rho^2) ( \xi + i \rho)-\varphi  \xi +\rho  \sigma \right) \nu ^2+\mu ^3 \left(\xi ^2+\rho ^2\right) \nu \\
    +2 (\xi ^2+\rho ^2) (2 \rho +\psi ) \nu -4 i \left(\xi ^2+\rho ^2\right)^2-i \mu ^2 \left((\nu ^2+2)( \xi ^2+ \rho^2)+2 \sigma  \xi +2 \varphi \rho \right)\\
    +\mu  (\rho -i \xi ) \left((i \xi +\rho ) \nu ^3+2 \left(2 (\xi^2+\rho^2) - (\varphi+ \rho)
    - i (\xi + \sigma)  \right)\nu \\
    -2 (\xi -i \rho ) (2 \rho +\psi )\right)\right)-w \left(-i \left(2 z^i-1\right) \left(\xi ^2+\rho ^2\right) \nu ^4-2 \left(z^i-1\right) \left(\xi ^2+\rho ^2\right) \nu ^3\\
    +\left(-2 (2z^i-1) \left( 2( \xi^2+ \rho^2)(\xi+i \rho)- \varphi \xi- \rho \sigma \right) +i ( \rho^2+ \xi^2)  \right) \nu ^2\\
    +\left(2 z^i-1\right) \mu ^3 (\xi ^2+\rho ^2) \nu +2 \left(\xi ^2+\rho ^2\right) \left(2 (\rho +\psi- i \xi ) z^i +2 i \xi -\psi \right) \nu\\
    -4 i \left(2 z^i-1\right) \left(\xi ^2+\rho ^2\right)^2
    +\mu ^2 \left(\left(-i(2z^i-1)(\nu^2+2)-2 \nu (z^i-1) \right) \xi ^2 \\
    -2 i (2 z^i-1) \sigma  \xi +\rho  \left(-2 z^i ( \nu \rho + i  \left( \nu^2\rho+ 2 \rho+ 2\varphi) \right) + 2 \nu \rho+ i (2 \rho+ \nu^2 \rho+2\varphi) \right)\right)\\
    +\mu  (\xi +i \rho ) \left(\left(2 z^i-1\right) (\xi -i \rho ) \nu ^3-\left((4z^i-1)( \xi - i \rho) + 2( 2 z^i-1)\left( \sigma- i ( \phi \\
    + 2 ( \xi^2+ \rho^2)) \right) \right) \nu +2 (2 z^i-1) (i \xi +\rho ) (2 \rho +\psi )\right)\right)\right) \left(4 \left(\xi^2 + \rho^2 + \xi\sigma + \rho\varphi ) \mu ^2\\
    -(\xi +i \rho ) (\nu  (\rho+\varphi+i (\sigma+ \xi) )+(\xi -i \rho ) (2 \rho +\psi )) \mu +i \left(-2 i \xi ^4
    +(2 \rho  (\nu -2 i \rho )\\
    +\nu  \psi ) \xi ^2+\nu ^2 \varphi  \xi +\rho  \left(-2 i \rho ^3+\nu  (2 \rho +\psi ) \rho -\nu ^2 \sigma \right)\right)\right) z^{2 i} +w \left(-2 \left(2 z^i-1\right) (\xi^2\\
    +\rho^2+\xi \sigma +\rho \varphi) \mu ^2+(\xi +i \rho ) \left(4 \nu  (i \xi +\rho +i \sigma +\varphi ) z^i-i \nu  \xi -\nu  (\rho +2 i \sigma +2 \varphi )\\
    +2 (2 z^i-1) (\xi -i \rho ) (2 \rho +\psi )\right) \mu -4 \left(2 z^i-1\right) \left(\xi ^2+\rho ^2\right)^2+\nu ^2 \left(4 i (\rho  \sigma -\xi  \varphi ) z^i\\
    +\xi ^2+\rho ^2-2 i( \rho  \sigma -\xi  \varphi) \right)
    +2 \nu  \left(\xi ^2+\rho ^2\right) \left(-2 (\xi +i (\rho +\psi )) z^i+2 \xi +i \psi \right)\right)\right)
\end{dmath*}
for parameters $\mu,\nu, \xi, \rho, \sigma, \phi, \psi \in \mathbb{R}.$ The above maps do not belong to the connected component of the identity in the group of CR automorphisms of $\partial \mathcal{W}_{\mathrm{un}}$, since they satisfy $f_w(1,2)=:\xi+i \rho \neq 0$ and $\operatorname{I m}f_z(1,2)=:\nu \neq 0.$

Assume now that $f_w(1,2)=\xi+ i \rho \neq 0$ and $\operatorname{Im}f_z(1,2)=0.$ Then, we obtain the following parametrization for the map which we denote by $H_2(z,w)=\left(f_2(z,w),g_2(z,w) \right):$
$$f_2(z,w)=\left(\frac{N_{f_2}(z,w)}{D_{f_2}(z,w)} \right)^i \text{ } \text{ and } \text{ } g_2(z,w)=\frac{N_{g_2}(z,w)}{D_{g_2}(z,w)}$$
\begin{dmath*}
    N_{f_2}(z,w)=w \left(\mu ^2 \left(4 i z^i ( \xi^2+ \rho^2+ (2i-1) (\rho  \sigma -\xi  \varphi) )\right)+2 \mu  (\xi ^2+\rho ^2) \left(2 z^i (\rho +i (\psi- \xi) )\\
    -2 \rho -i \psi\right)-4 \left(2 z^i-1\right) \left(\xi ^2+\rho ^2\right)^2\right)+4 z^{2 i} \left(i \mu ^2 (\xi  \varphi -\rho  \sigma )\\
    +i \mu  \left(\xi ^2+\rho ^2\right) (2 \xi -\psi )
    +2 \left(\xi ^2+\rho ^2\right)^2\right)
\end{dmath*}
\begin{dmath*}
    D_{f_2}(z,w)=w \left(\mu ^3 \left(z^i-1\right) \left(\xi ^2+\rho ^2\right)+\mu ^2 \left(2i (2 z^i-1) \left( (\xi^2+ \rho^2)(\xi+ i \rho)+ \rho \sigma- \xi \varphi \right)\\
    + \xi^2+ \rho^2  \right)+2 \mu  \left(\xi ^2+\rho ^2\right) \left(2 z^i (-i \xi +\rho +i \psi )-2 \rho -i \psi\right)\\
    -4 \left(2 z^i-1\right) (\xi ^2+\rho ^2)^2\right)
    -4 i z^{2 i} \left(\mu ^2 \left(( \xi^2+ \rho^2)(\xi+ i \rho) -\xi  \varphi+\rho  \sigma \right)-\mu  \left(\xi ^2+\rho ^2\right) (2 \xi -\psi )+2 i \left(\xi ^2+\rho ^2\right)^2\right)
\end{dmath*}
\begin{dmath*}
    N_{g_2}(z,w)=2 \left(w \left(\mu ^3 \left(z^i-1 \right) \left(\xi ^2+\rho ^2\right)+\mu ^2 \left(2i (2z^i-1) \left(( \xi^2+ \rho^2)( \xi+i \rho)  + \rho \sigma- \xi \varphi \right)\\
    +\xi^2 + \rho^2  \right)+2 \mu  \left(\xi ^2+\rho ^2\right) \left(2 z^i (\rho +i (\psi- \xi) )-2 \rho -i \psi\right)
    -4 \left(2 z^i-1\right) (\xi ^2+\rho ^2)^2\right)\\
    -4 i z^{2 i} \left(\mu ^2 \left( (\xi^2+ \rho^2)( \xi+i \rho)-\xi  \varphi+ \rho  \sigma \right)-\mu  \left(\xi ^2+\rho ^2\right) (2 \xi -\psi )+2 i \left(\xi ^2+\rho ^2\right)^2\right)\right)^2
\end{dmath*}
\begin{dmath*}
    D_{g_2}(z,w)=\left(w \left(\mu ^2 \left(2 i (2z^i-1) (\rho  \sigma -\xi  \varphi )+\xi ^2+\rho ^2\right)
    +2 \mu  (\xi ^2+\rho ^2) \left(2 z^i (\rho +i (\psi-\xi) )\\
    -2 \rho -i \psi \right)-4 (2 z^i-1) \left(\xi ^2+\rho ^2\right)^2\right)+4 z^{2 i} \left(i \mu ^2 (\xi  \varphi -\rho  \sigma)\\
    +i \mu  (\xi ^2+\rho ^2) (2 \xi -\psi )+2 (\xi ^2+\rho ^2)^2\right)\right) \left(w \left(-\left(\mu ^4 (2 z^i-1) (\xi ^2+\rho ^2)\right)\\
    +2 \mu ^3 \left(z^i-1\right) (\xi ^2+\rho ^2)+\mu ^2 \left(2i(2z^i-1) \left( 2 (\xi^2+ \rho^2)( \xi+i \rho) - \xi \varphi+ \rho \sigma \right)\\
    + \xi^2+ \rho^2  \right)+2 \mu  \left(\xi ^2+\rho ^2\right) \left(2 z^i (\rho +i (\psi- \xi) )-2 \rho -i \psi \right)-4 \left(2 z^i-1\right) (\xi ^2+\rho ^2)^2\right)+2 z^{2 i} \left(\mu ^4 (\xi ^2+\rho ^2)-2 i \mu ^2 \left(2 (\xi ^2+ \rho^2)( \xi+ i \rho)
    - \xi  \varphi +\rho  \sigma \right)\\
    +2 i \mu (\xi ^2+\rho ^2) (2 \xi -\psi )+4 (\xi ^2+\rho ^2)^2\right)\right)
\end{dmath*}
for real parameters $\mu,\xi, \rho, \sigma,\phi,\psi.$ The maps $H_2(z,w)|_{\mu=1,\sigma=\phi=\psi=0}$ with free parameters $\xi, \rho \in \mathbb{R}$ belong to the connected component of the identity in the group $\mathrm{Aut} \left( \partial \mathcal{W}_{\mathrm{un}},(1,2) \right).$ A straightforward computation using \Cref{flowsinfi} shows that only these correspond to infinitesimal CR automorphisms.

We now consider the second case, namely when $f_w(1, 2):=\xi+i \rho= 0.$ Under this assumption, equation (\ref{4}) yields the additional condition:
$$g_{w^2}(1,2)= |f_z(1,2)|^4 -|f_z(1,2)|^2+2 f_{zw}(1,2) \bar{f}_z(1,2).$$
We substitute this into the expressions for $f$ and $g,$ and expand as power series in $z$ near $1:$
\begin{align*}
        f(z,w)=\sum_{j=0}^{\infty}f_j(w) (z-1)^j \text{ and } g(z,w)=\sum_{j=0}^{\infty}g_j(w) (z-1)^j.
\end{align*}
Using equation (\ref{5}), we find $b_{3,0}=4096f_{w^2}(1,2) \bar{f}_z(1,2),$ which implies $f_{w^2}(1,2)=0.$

Applying these conditions and assuming $\operatorname{Im}f_z(1,2)=\nu \neq 0,$ we obtain the following formulas for which we use the notation $H_3(z,w)=\left(f_3(z,w),g_3(z,w) \right):$
\begin{dmath*}
    f_3(z,w)=\left(\frac{w \left(4 \psi  z^i- i \nu -2 \psi\right)-4 \psi  z^{2 i}}{w \left(4 \psi  z^i+\nu ^2(z^i-1)-i \nu  \left(1+\mu  \left(z^i-1\right)\right)-2 \psi \right)-4 \psi  z^{2 i}}\right)^i
\end{dmath*}
and
\begin{dmath*}
    g_3(z,w)=-\left(2 i \left(w \left(-4 \psi  z^i-\nu ^2 (z^i-1 ) +i \nu  \left(1+\mu  (z^i-1) \right)+2 \psi\right) 
    +4 \psi  z^{2 i}\right)^2\right)/
    \\
    \left(\left(w \left(-4 \psi  z^i+i \nu +2 \psi\right)+4 \psi  z^{2 i}\right) \left(w \left(-(\mu -1) \nu  \left(2 \mu  z^i-\mu +1\right)\\
    -\nu ^3 \left(2 z^i-1\right)+2 i \nu ^2 \left(z^i-1\right)
    +2 i \psi  \left(2 z^i-1\right)\right)
    +2 z^{2 i} \left(\mu ^2 \nu +\nu ^3-2 i \psi \right)\right)\right)
\end{dmath*}
for real parameters $\mu,\nu, \psi \in \mathbb{R}.$ The maps $H_3(z,w)|_{\mu=1,\psi=0}$ with parameter $\nu \in \mathbb{R}$ belong to the connected component of the identity in the group $\mathrm{Aut} \left( \partial \mathcal{W}_{\mathrm{un}},(1,2) \right),$ following the same argument as for $H_2.$

On the other hand, if $\operatorname{Im} f_z(1,2)=\nu=0,$ we obtain the following parametrization for $H_4(z,w)=\left(f_4(z,w),g_4(z,w) \right):$
\begin{dmath*}
    f_4(z,w)=\left(\frac{w \left(2 i \psi  (2 z^i-1)- \mu \right)-4 i \psi  z^{2 i}}{w \left(2 i \psi  (2 z^i-1)-\mu ^2 (z^i-1)-\mu \right)-4 i \psi  z^{2 i}}\right)^i,
\end{dmath*}
and
\begin{dmath*}
    g_4(z,w)=\left(2 \left(w \left(-2 i \psi  (2 z^i-1)+\mu ^2 (z^i-1)+\mu \right)+4 i \psi  z^{2 i}\right)^2\right)/ \\
    \left(\left(w \left( -2 i \psi  (2 z^i-1)+\mu\right)+4 i \psi  z^{2 i}\right) \left(w \left(-2 i \psi  (2 z^i-1)\\
    +\mu ^2 \left(-2 (\mu -1) z^i+\mu-2\right)+\mu 
    \right)+2 z^{2 i} \left(\mu ^3+2 i \psi \right)\right) \right)
\end{dmath*}
for real parameters $\mu,\psi \in \mathbb{R}.$ Similarly, the maps $H_4$ belong to the connected component of the identity in $\mathrm{Aut}(\partial \mathcal{W}_{\mathrm{un}},(1,2)),$ as can be verified, in the same manner as for $H_2$ and $H_3.$

Therefore, we obtain a Lie group of real dimension $5,$ which completes the proof.
\end{proof}

The next step is to determine the second type of point-moving maps near $(1,2)\in \partial \mathcal{W}_{\mathrm{un}}.$ 

\begin{lemma} \label{autnonisobun}
    The factor group of local point-moving maps $$\mathrm{Aut}\left(\partial \mathcal{W}_{\mathrm{un}} ,(1,2)\right)/ \mathrm{Aut}_{(1,2)}\left(\partial \mathcal{W}_{\mathrm{un}} \setminus \mathcal{A}_{\mathrm{un}},(1,2)\right)$$ is isomorphic to the group consisting of the following maps:
    \begin{align*}
        H_{5}(z,w)=\left(e^{\alpha+i\beta}z, \frac{e^{2i\alpha}z^{2i}w}{z^{2i}-\gamma iw} \right), \text{ } \text{ for } \alpha, \beta, \gamma \in \mathbb{R}.
    \end{align*}
\end{lemma}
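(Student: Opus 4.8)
The plan is to exhibit a concrete three–parameter group $G$ of point–moving local CR automorphisms, to prove that $G$ acts transitively on $M:=\partial\mathcal{W}_{\mathrm{un}}\setminus\mathcal{A}_{\mathrm{un}}$ while meeting the isotropy group $\mathrm{Aut}_{(1,2)}(\partial\mathcal{W}_{\mathrm{un}}\setminus\mathcal{A}_{\mathrm{un}},(1,2))$ only in the identity, and then to read the factor group off the orbit–stabilizer correspondence. The elements of $G$ turn out to be exactly the maps $H_5$ in the statement, so this gives the assertion.

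First I record the obvious automorphisms. A direct check on the defining equation shows that, for all $\alpha,\beta\in\mathbb{R}$, the dilation $\Lambda_{\alpha,\beta}(z,w)=(e^{\alpha+i\beta}z,\,e^{2i\alpha}w)$ preserves $\mathcal{W}_{\mathrm{un}}$, because $\log|e^{\alpha+i\beta}z|^{2}=2\alpha+\log|z|^{2}$ forces $e^{i\log|e^{\alpha+i\beta}z|^{2}}=e^{2i\alpha}e^{i\log|z|^{2}}$ and $|e^{2i\alpha}|=1$. These form a two–parameter group whose orbit of $(1,2)$ is only two–dimensional, so one transversal direction is still missing. To obtain it, consider the holomorphic vector field $X=\dfrac{i\,w^{2}}{z^{2i}}\,\partial_{w}$ near $(1,2)$, where $z^{2i}=e^{2i\log z}$. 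Writing $\zeta=e^{i\log|z|^{2}}$ and using the identity $z^{2i}=\zeta\,e^{-2\arg z}$ together with the boundary relation $|w|^{2}=2\operatorname{Re}(w\bar\zeta)$ valid on $\{\rho=0\}$, a short computation gives $X\rho=i\,e^{2\arg z}\,|w|^{2}\in i\mathbb{R}$ on the boundary, so $\operatorname{Re}X$ is tangent to $\partial\mathcal{W}_{\mathrm{un}}$, i.e. $X\in\mathfrak{hol}(\partial\mathcal{W}_{\mathrm{un}},p)$ for $p$ near $(1,2)$. Integrating $\dot w = i w^{2}/z^{2i}$ with $z$ held fixed yields the flow $\Phi^{X}_{\gamma}(z,w)=\bigl(z,\ z^{2i}w/(z^{2i}-\gamma i w)\bigr)$, which lies in $\mathrm{Aut}(\partial\mathcal{W}_{\mathrm{un}},(1,2))$ by \Cref{flowsinfi}(i) (and, since $1-2\gamma i\neq 0$ for real $\gamma$, it is a well-defined germ at $(1,2)$ for every $\gamma\in\mathbb{R}$).

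Now set $H_{5}^{(\alpha,\beta,\gamma)}:=\Lambda_{\alpha,\beta}\circ\Phi^{X}_{\gamma}$; this is precisely the map displayed in the statement, and I let $G$ be the collection of all such maps. A short composition computation gives $\Phi^{X}_{\gamma}\circ\Lambda_{\alpha,\beta}=\Lambda_{\alpha,\beta}\circ\Phi^{X}_{e^{2\beta}\gamma}$ and $\Phi^{X}_{\gamma}\circ\Phi^{X}_{\gamma'}=\Phi^{X}_{\gamma+\gamma'}$, so $G$ is a three–dimensional (solvable) subgroup of $\mathrm{Aut}(\partial\mathcal{W}_{\mathrm{un}},(1,2))$. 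Evaluating at the base point, $H_{5}^{(\alpha,\beta,\gamma)}(1,2)=\bigl(e^{\alpha+i\beta},\,2e^{2i\alpha}/(1-2\gamma i)\bigr)$: as $\alpha,\beta$ vary the first coordinate sweeps out $\mathbb{C}^{*}$, and for fixed $\alpha$ the image of $\gamma\mapsto 2e^{2i\alpha}/(1-2\gamma i)$ is the circle $\{|w-e^{i\log|z|^{2}}|^{2}=1\}$ with the value $0$ deleted; hence $G\cdot(1,2)=M$, so $G$ acts transitively on $M$. Moreover $H_{5}^{(\alpha,\beta,\gamma)}(1,2)=(1,2)$ forces $\alpha=\gamma=0$ and $\beta\in 2\pi\mathbb{Z}$, so $G$ meets the isotropy group trivially. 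Finally, if $H\in\mathrm{Aut}(\partial\mathcal{W}_{\mathrm{un}},(1,2))$ is point–moving then $q:=H(1,2)\in M$ (since $(1,2)\in M$ and $H(M)\subset M$), so by transitivity there is $H_{5}\in G$ with $H_{5}(1,2)=q$; then $H_{5}^{-1}\circ H$ is a germ of CR automorphism fixing $(1,2)$, i.e. an element of $\mathrm{Aut}_{(1,2)}(\partial\mathcal{W}_{\mathrm{un}}\setminus\mathcal{A}_{\mathrm{un}},(1,2))$, whence $H\in H_{5}\cdot\mathrm{Aut}_{(1,2)}$. Combined with the trivial intersection, this shows that $G$ is a complete and irredundant set of coset representatives, so the factor group is isomorphic to $G$, i.e. to the group of maps $H_5$.

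The orbit–stabilizer bookkeeping is routine; the real work is locating the generator $X=i w^{2}z^{-2i}\partial_{w}$ and verifying both that $\operatorname{Re}X$ is tangent to $\partial\mathcal{W}_{\mathrm{un}}$ and that the resulting $G$ is transitive on $M$, all of which hinge on handling the factor $z^{2i}=e^{2i\log z}$ and its interaction with $e^{i\log|z|^{2}}$ (the two differ by the positive real factor $e^{2\arg z}$). Alternatively one may pull back the Heisenberg translations under the explicit local biholomorphism of $\partial\mathcal{W}_{\mathrm{un}}$ onto the sphere constructed in this section: this produces the same three–parameter family and makes the transitivity and the group law automatic, since the Heisenberg translations are coset representatives for $\mathrm{PU}(2,1)$ modulo the isotropy of a point.
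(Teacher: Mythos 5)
Your proof is correct, and it takes a genuinely different (and more complete) route than the paper's. The paper handles this lemma in one line: it notes that the displayed maps are point-moving CR automorphisms and bounds the coset space by $\dim_{\mathbb{R}}\mathrm{Aut}\left(\partial\mathcal{W}_{\mathrm{un}}\right)/\mathrm{Aut}_{(1,2)}\left(\partial\mathcal{W}_{\mathrm{un}}\setminus\mathcal{A}_{\mathrm{un}},(1,2)\right)\leq\dim_{\mathbb{R}}\left(\partial\mathcal{W}_{\mathrm{un}}\setminus\mathcal{A}_{\mathrm{un}}\right)=3$, leaving exhaustiveness of the three-parameter family implicit. You instead construct the family: the dilations $\Lambda_{\alpha,\beta}$ together with the flow of the explicit infinitesimal CR automorphism $X=iw^{2}z^{-2i}\partial_{w}$ (your tangency computation $X\rho=ie^{2\arg z}|w|^{2}\in i\mathbb{R}$ on $\{\rho=0\}$ checks out, as does the integration giving $\Phi^{X}_{\gamma}$ and the relation $\Phi^{X}_{\gamma}\circ\Lambda_{\alpha,\beta}=\Lambda_{\alpha,\beta}\circ\Phi^{X}_{e^{2\beta}\gamma}$), and you then prove transitivity of the resulting group $G$ on the strongly pseudoconvex part of the boundary and trivial intersection with the isotropy group. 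This buys something the paper's dimension count does not by itself deliver: the orbit--stabilizer argument shows directly that every point-moving germ lies in $G\cdot\mathrm{Aut}_{(1,2)}\left(\partial\mathcal{W}_{\mathrm{un}}\setminus\mathcal{A}_{\mathrm{un}},(1,2)\right)$, i.e., that the family $H_{5}$ is exhaustive and irredundant, whereas the inequality only bounds a dimension. Two small points to tighten, neither affecting the substance: (i) the definition of $\mathrm{Aut}\left(\partial\mathcal{W}_{\mathrm{un}},(1,2)\right)$ only gives $H(1,2)\in\partial\mathcal{W}_{\mathrm{un}}$, so to place $q=H(1,2)$ off $\mathcal{A}_{\mathrm{un}}$ you should add that $H$ is a local biholomorphism carrying a strongly pseudoconvex germ of the hypersurface onto an open piece of it and Levi nondegeneracy is a biholomorphic invariant; (ii) the passage from small $\gamma$ (where \Cref{flowsinfi} applies) to all real $\gamma$ deserves one sentence, e.g. the observation that $\Phi^{X}_{\gamma}$ preserves the relation $\operatorname{Re}\bigl(e^{i\log|z|^{2}}/w\bigr)=\tfrac12$ exactly, since it shifts $e^{i\log|z|^{2}}/w$ by the purely imaginary quantity $-i\gamma e^{2\arg z}$. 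Your closing alternative, pulling back the Heisenberg translations under the local spherical map of \Cref{locsph}, is indeed the quickest conceptual route and is consistent with how the paper exploits local sphericity elsewhere.
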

\begin{proof}
Clearly the above maps are point-moving CR automorphisms, and $$\dim_{\mathbb{R}} \mathrm{Aut}\left(\partial \mathcal{W}_{\mathrm{un}} \right)/ \mathrm{Aut}\left(\partial \mathcal{W}_{\mathrm{un}} \setminus \mathcal{A}_{\mathrm{un}},(1,2)\right) \leq \dim_{\mathbb{R}} \left( \partial \mathcal{W}_{\mathrm{un}} \setminus \mathcal{A}_{\mathrm{un}} \right) =3.$$ 
\end{proof}

 Using the above, we can now prove \Cref{autbun}:
\begin{proof}    
From \Cref{autisobun} and \Cref{autnonisobun}
we have formulas for all local CR automorphisms in a neighborhood of $(1,2) \in \partial \mathcal{W}_{\mathrm{un}} \setminus \mathcal{A}_{\mathrm{un}}. $
We now need to determine which of these local automorphisms $H_j(z,w), j=1,\cdots,5$ extend globally. We set $z=e^{\zeta},$ with $\zeta \in \mathbb{C},$ and check, with a slight abuse of notation, whether $$H_j(\zeta,w)=H_j(\zeta+ 2i \pi n,w)$$ for all $n \in \mathbb{N}.$ That is, we check invariance under logarithmic monodromy.

Consider first $H_1(z,w),$ and suppose that $H_1(\zeta,w)|_{(0,2)}=H_1(\zeta+2 \pi i n,w)|_{(0,2)}.$ In particular, this implies $f_1(0,2)=f_1(2 i \pi n,2)$ for all $n \in \mathbb{Z}.$ Extending $f_1(2i \pi n,2)$ to $n \in \mathbb{R},$ it is a continuous function of $n$ and hence there exists $\{n_j\}_{j\in \mathbb{Z}} \in \mathbb{R},$ with $n_j \in (j,j+1)$ such that $\frac{\partial}{\partial n} f_1(\zeta+ 2\pi n,w)|_{n=n_j}=0.$ This leads to either $\mu=\rho=0,$ or $\rho=- \frac{\mu \xi}{\nu}.$
Plugging the above to $g_1( \zeta,w)|_{(\zeta,w)=(0,2)}=g_1(\zeta+2 \pi i,w)|_{(\zeta,w)=(0,2)}$ yields $\nu=0$ in both cases, which is a contradiction.

Now, consider $H_2(z,w)$ and suppose that, with a slight abuse of notation, as before, $H_2(\zeta,w)|_{{(0,2)}}=H_2(\zeta+2 \pi i n,w)|_{(0,2)}.$ This implies $f_2(0,2)=f_2(2 i \pi n,2)$ for all $n \in \mathbb{Z},$ and hence there exists $\{n_j\}_{j\in \mathbb{Z}} \in \mathbb{R},$ with $n_j \in (j,j+1)$ such that $\frac{\partial}{\partial n} f_2(\zeta+ 2\pi n,w)|_{n=n_j}=0.$ This implies either $\xi=0$ and $ \psi=-\frac{\mu \sigma}{\rho},$ or $\rho=\xi=0,$ a contradiction to $f_w(1,2)=\xi+ i \rho \neq 0.$
Plugging the former condition into $f_2( \zeta,w)|_{(\zeta,w)=(0,2)}=f_2(\zeta+2 \pi in ,w)|_{(\zeta,w)=(0,2)}$ for $n \neq 0$ yields $$\rho_{\mu,n,k}=-\frac{\mu \left(e^{2\pi n}-e^{2\pi n+2 \pi k}-e^{2\pi k}\mu+e^{2 \pi n+2 \pi k} \mu \right)}{2(e^{2 \pi n}-1) (e^{2 \pi k}-1)}$$ for some $k \in \mathbb{Z} \setminus \{ 0\}.$ Comparing $\rho_{\mu,1,k}=\rho_{\mu,n,k}$ for $n \neq 1$ yields $\mu=0,$ a contradiction to $f_z(1,2)=\mu+ i \nu \neq 0.$

For $H_3(z,w),$ assuming, again by a slight abuse of notation, $H_3(\zeta,w)|_{{(0,2)}}=H_3(\zeta+2 \pi i,w)|_{(0,2)},$ we get $g_3(0,2)=g_3(2 i \pi,2),$ which leads to $\nu=0,$ a contradiction.

For $H_4(z,w),$ the condition
$H_4(\zeta,w)|_{{(0,2)}}=H_4(\zeta+2 \pi i,w)|_{(0,2)}$ implies $g_4(0,2)=g_4(2 i \pi,2)$ yielding $\mu=0$ or $\psi=0.$ The former contradicts $f_z(1,2)=\mu+ i \nu \neq 0.$ If $\psi=0,$ then considering $g_w(0,2)=g_w(2i \pi,2)$ gives three cases: $\mu=0,$ a contradiction; $\mu=1,$ which gives the identity map; or $\mu=\frac{e^{2 \pi}+1}{e^{2 \pi}-1},$ which yields $f_4(\zeta+2 \pi i ,w)|_{(\zeta,w)=(0,2)}=e^{\-\pi} \neq 1,$ a contradiction.

Finally, for $H_5(z,w),$  we assume again $H_5(\zeta,w)|_{(0,2)}=H_5(\zeta+2 \pi i,w)|_{(0,2)},$ which implies $f_5(0,2)=f_5(2 i \pi,2).$ This yields $\gamma=0,$ and hence $$H_5(z,w)=\left( e^{\alpha+i \beta}z,e^{2i \alpha} w \right),$$ which is single-valued for all $n \in \mathbb{Z}.$

Hence, the only non-trivial automorphisms that extend globally are the maps
$$H(z,w)=\left( e^{\alpha+i \beta}z, e^{2i \alpha}w \right)$$
for $\alpha, \beta \in \mathbb{R}.$
\end{proof}

From the proof of \Cref{autbun}, we can derive the following:
\begin{corollary} \label{locsph}
    The set $\partial \mathcal{W}_{\mathrm{un}}\setminus \mathcal{A}_{\mathrm{un}}$ is locally spherical; that is, the boundary of the unbounded worm domain is locally spherical near all strongly pseudoconvex points. In particular, the mapping
    $$(z,w) \mapsto \left(i-iz^i,i-2iz^i+\frac{2iz^{2i}}{w} \right)$$
    maps locally $(\partial \mathcal{W}_{\mathrm{un}} \setminus \mathcal{A}_{\mathrm{un}},(1,2))$ into the Heisenberg Hypersurface $(\mathbb{H},(0,0)).$
\end{corollary}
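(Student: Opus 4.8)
The plan is to check the statement by a direct computation with the exhibited mapping near $(1,2)$, and then to spread local sphericity over the whole strongly pseudoconvex locus using the local CR automorphisms found in \Cref{autnonisobun}. Write
\[
\Phi(z,w) \;=\; \Bigl(\,i - iz^i,\;\; i - 2iz^i + \tfrac{2iz^{2i}}{w}\,\Bigr) \;=:\; (Z_1,Z_2),
\]
with the convention $z^{ia}=e^{ia\log z}$ from the proof of \Cref{autbun}. First I would record that $\Phi$ is a biholomorphism of a neighborhood of $(1,2)$: one has $\Phi(1,2)=(0,0)$, and since $\partial_z Z_1 = z^i/z$, $\partial_w Z_1 = 0$, $\partial_w Z_2 = -2iz^{2i}/w^2$, the Jacobian of $\Phi$ at $(1,2)$ is lower triangular with nonzero diagonal $(1,\,-\tfrac{i}{2})$, hence invertible. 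It is also worth noting where $\Phi$ comes from: it carries the first Segre variety $\mathcal{S}_{(1,2)}=\{w=2z^{2i}/(2z^i-1)\}$ onto $\{Z_2=0\}$, the Segre variety of the origin in $\mathbb{H}=\{(Z_1,Z_2)\in\mathbb{C}^2:\operatorname{Im}Z_2=|Z_1|^2\}$, so $\Phi$ is precisely the normalizing map implicit in the proof of \Cref{autbun}.

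The heart of the matter is an algebraic identity. Complexifying as in the setup preceding \Cref{autbun}, write $\overline{Z_1}=i(\chi^{-i}-1)$ and $\overline{Z_2}=-i+2i\chi^{-i}-\tfrac{2i\chi^{-2i}}{\tau}$ for the complexifications of the conjugate functions, so that $\mathbb{H}$ has complexified defining equation $\tfrac{Z_2-\overline{Z_2}}{2i}-Z_1\overline{Z_1}=0$. Substituting the formulas for $Z_1,Z_2$, a direct computation gives $Z_2-\overline{Z_2}-2iZ_1\overline{Z_1}=2i\bigl(\tfrac{z^{2i}}{w}+\tfrac{\chi^{-2i}}{\tau}-z^i\chi^{-i}\bigr)$; multiplying by $\tfrac{w\tau}{2i}$ and using the elementary identity $\tau z^{2i}+w\chi^{-2i}-w\tau z^i\chi^{-i}=-z^i\chi^{-i}\,\rho(z,w,\chi,\tau)$, with $\rho(z,w,\chi,\tau)=w\tau-we^{-i\log(z\chi)}-\tau e^{i\log(z\chi)}$ the complexified defining function from the setup of \Cref{autbun}, one arrives at
\begin{equation*}
    \frac{Z_2-\overline{Z_2}}{2i}-Z_1\overline{Z_1} \;=\; -\,\frac{z^i\chi^{-i}}{w\tau}\,\rho(z,w,\chi,\tau).
\end{equation*}
Since the factor $-z^i\chi^{-i}/(w\tau)$ is holomorphic and nonvanishing near the complexified point $(1,2,1,2)$ — there it equals $-\tfrac{1}{4}$ — this shows that $\Phi$ maps the germ of $\partial\mathcal{W}_{\mathrm{un}}\setminus\mathcal{A}_{\mathrm{un}}$ at $(1,2)$ into $\mathbb{H}$; together with the biholomorphy of $\Phi$, that is exactly the assertion that $\partial\mathcal{W}_{\mathrm{un}}\setminus\mathcal{A}_{\mathrm{un}}$ is locally spherical at $(1,2)$.

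It remains to obtain local sphericity at \emph{every} strongly pseudoconvex boundary point, and here I would invoke the point-moving local CR automorphisms $H_5(z,w)=\bigl(e^{\alpha+i\beta}z,\;\tfrac{e^{2i\alpha}z^{2i}w}{z^{2i}-\gamma i w}\bigr)$ from \Cref{autnonisobun}, each of which is a germ of ambient biholomorphism at $(1,2)$ preserving $\partial\mathcal{W}_{\mathrm{un}}\setminus\mathcal{A}_{\mathrm{un}}$. A short check shows that, as $(\alpha,\beta,\gamma)$ range over $\mathbb{R}^3$, the points $H_5(1,2)=\bigl(e^{\alpha+i\beta},\,\tfrac{2e^{2i\alpha}}{1-2i\gamma}\bigr)$ exhaust all of $\partial\mathcal{W}_{\mathrm{un}}\setminus\mathcal{A}_{\mathrm{un}}$: the quantity $\tfrac{2}{1-2i\gamma}$ traces the circle $\{|\zeta-1|=1\}$, and choosing $\alpha=\log|z_0|$ gives $e^{2i\alpha}=e^{i\log|z_0|^2}$, so for any target $q=(z_0,w_0)$ on the boundary the ratio $w_0/e^{i\log|z_0|^2}$ lies on that circle, i.e.\ $|w_0-e^{i\log|z_0|^2}|=1$. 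For such $q$ the germ $\Phi\circ H_5^{-1}$ at $q$ is a biholomorphism of ambient germs taking $\partial\mathcal{W}_{\mathrm{un}}\setminus\mathcal{A}_{\mathrm{un}}$ into $(\mathbb{H},0)$, so $\partial\mathcal{W}_{\mathrm{un}}\setminus\mathcal{A}_{\mathrm{un}}$ is locally spherical at $q$ as well, which is the corollary. The only genuine obstacle is producing the normalizing map $\Phi$ and verifying the algebraic identity of the second step; once $\Phi$ is in hand this is a routine expansion, and the transitivity argument is bookkeeping.
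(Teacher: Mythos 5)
Your proposal is correct, and it takes a genuinely different route from the paper. The paper's own proof is a rigidity argument: it reads off from the proof of \Cref{autisobun} that $\dim_{\mathbb{R}}\mathfrak{hol}\left(\partial\mathcal{W}_{\mathrm{un}}\setminus\mathcal{A}_{\mathrm{un}},(1,2)\right)=5=\dim_{\mathbb{R}}\mathfrak{hol}\left(\mathbb{H},(0,0)\right)$ and invokes the Chern--Moser-type fact that maximal isotropy dimension characterizes sphericity, relegating the explicit map to an ``easy to verify'' remark. You instead verify the map directly: the complexified identity $\frac{Z_2-\overline{Z_2}}{2i}-Z_1\overline{Z_1}=-\frac{z^i\chi^{-i}}{w\tau}\,\rho(z,w,\chi,\tau)$ is correct (I checked it, including the value $-\tfrac14$ of the unit at $(1,2,1,2)$ and the nondegenerate triangular Jacobian), and it exhibits the pullback of the Heisenberg defining function as a nonvanishing multiple of $\rho$, which is exactly local sphericity at $(1,2)$; your transport of this to an arbitrary strongly pseudoconvex point via the point-moving germs $H_5$ of \Cref{autnonisobun} is also sound, since $H_5(1,2)=\bigl(e^{\alpha+i\beta},\tfrac{2e^{2i\alpha}}{1-2i\gamma}\bigr)$ does sweep out all of $\partial\mathcal{W}_{\mathrm{un}}\setminus\mathcal{A}_{\mathrm{un}}$ and $\rho\circ H_5$ is a unit times $\rho$. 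What your approach buys is independence from the heavy classification in \Cref{autisobun} and from the maximal-dimension criterion: it is elementary, self-contained, and supplies the verification the paper only asserts; what the paper's approach buys is brevity, since the dimension count is already available from its preceding lemma. One further simplification you could note: your identity holds verbatim near \emph{any} point of $\partial\mathcal{W}_{\mathrm{un}}\setminus\mathcal{A}_{\mathrm{un}}$ after choosing a local branch of $\log z$ (the unit $-z^i\chi^{-i}/(w\tau)$ is nonvanishing wherever $w\neq 0$), so the $H_5$-transitivity step, while correct, is not actually needed for the ``all strongly pseudoconvex points'' part of the statement.
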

\begin{proof}
By the proof of \Cref{autisobun},
    $$\dim_{\mathbb{R}} \mathfrak{hol}\left(\partial \mathcal{W}_{\mathrm{un}} \setminus \mathcal{A}_{\mathrm{un}},(1,2) \right)=5=\dim_{\mathbb{R}} \mathfrak{hol}\left(\mathbb{H},(0,0)\right).$$
    Hence, it follows that the boundary of the unbounded worm domain is locally spherical. Indeed, it is easy to verify that the above map maps locally $(\partial \mathcal{W}_{\mathrm{un}} \setminus \mathcal{A})$ to $(\mathbb{H},(0,0)).$
\end{proof}

\section{Automorphisms of the bounded worm domain}

After studying the unbounded version of the worm domain, we can now determine the automorphisms of the Diederich Fornæss worm domain. The main result, as previously noted, is the following:

\begin{theorem} \label{autoworm}
    The automorphisms of the worm domain $\mathcal{W}$ are maps of the form
    $$F(z,w)=\left(e^{i \theta}z,w \right)$$
where $\theta \in \mathbb{R},$ i.e. rotations in the $z-$variable.
\end{theorem}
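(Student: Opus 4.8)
The plan is to reduce the problem to the already-established classification of CR automorphisms of $\partial\mathcal{W}_{\mathrm{un}}$ via two ingredients: boundary regularity of biholomorphisms and the fact that $\mathcal{W}$ and $\mathcal{W}_{\mathrm{un}}$ coincide near strongly pseudoconvex points over the central slab. First I would take $F\in\mathrm{Aut}(\mathcal{W})$ and show that it extends smoothly across $\partial\mathcal{W}\setminus\mathcal{A}$. Since $\partial\mathcal{W}$ is strongly pseudoconvex at every point of $\partial\mathcal{W}\setminus\mathcal{A}$, the $\bar\partial$-Neumann problem is subelliptic there, so $\mathcal{W}$ satisfies Local Condition $\mathrm{R}$ at each such point; applying \Cref{bellext} to the proper holomorphic maps $F$ and $F^{-1}$ shows both extend smoothly across $\partial\mathcal{W}\setminus\mathcal{A}$. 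Hence $F$ restricts to a CR automorphism of the smooth hypersurface $\partial\mathcal{W}\setminus\mathcal{A}$, and, being a local biholomorphism, it preserves the Levi form up to a conformal factor and preserves local sphericity, so it carries the spherical strongly pseudoconvex part of $\partial\mathcal{W}$ (by \Cref{locsph1}, everything outside the exceptional locus and the caps) onto itself.

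Next I would work near the point $(1,2)\in\partial\mathcal{W}$, which is strongly pseudoconvex and lies in the central slab $\{\,|\log|z|^2|<\mu\,\}$, where $\eta\equiv 0$ and therefore $\mathcal{W}$ and $\mathcal{W}_{\mathrm{un}}$, hence $\partial\mathcal{W}$ and $\partial\mathcal{W}_{\mathrm{un}}$, coincide; in particular $\partial\mathcal{W}$ is real-analytic near $(1,2)$ and there equals $\partial\mathcal{W}_{\mathrm{un}}$. The crucial point is that $F$ sends this real-analytic spherical boundary piece back into the portion of $\partial\mathcal{W}$ on which $\partial\mathcal{W}=\partial\mathcal{W}_{\mathrm{un}}$: the image germ $F\big((\partial\mathcal{W}_{\mathrm{un}},(1,2))\big)$ is a real-analytic hypersurface germ contained in $\partial\mathcal{W}$ and still spherical, which, together with the non-sphericity of the caps and the non-real-analyticity of $\partial\mathcal{W}$ on the transition region $\{0<\eta<1\}$, forces $F(1,2)$ into the central slab. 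Consequently $F$ is, near $(1,2)$, a germ of CR automorphism of $(\partial\mathcal{W}_{\mathrm{un}}\setminus\mathcal{A}_{\mathrm{un}},(1,2))$. \emph{This is the step I expect to be the main obstacle}: ruling out that the real-analytic spherical piece near $(1,2)$ is mapped into the merely smooth (yet possibly also spherical) part of $\partial\mathcal{W}$ lying over $\{0<\eta<1\}$ requires the precise interplay of real-analyticity of the boundary, the location of the Levi-flat annulus $\mathcal{A}$, and the non-spherical caps.

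Once this is in place, I would invoke the computations behind \Cref{autbun}. By \Cref{autisobun} and \Cref{autnonisobun}, every germ of CR automorphism of $\partial\mathcal{W}_{\mathrm{un}}$ at $(1,2)$ is, after composition with the explicit isotropy and point-moving models, one of the maps $H_1,\dots,H_5$, all of which are written via $z^{i}=e^{i\log z}$ and are in general multivalued around $\{z=0\}$. But $F$ extends holomorphically past $\partial\mathcal{W}$ along the loop $\theta\mapsto(e^{i\theta},2)\subset\partial\mathcal{W}\setminus\mathcal{A}$ (the boundary is real-analytic and strongly pseudoconvex there), and this loop generates $\pi_1(\mathcal{W})\cong\mathbb{Z}$; since $F$ is single-valued, its analytic continuation around it is trivial. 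The monodromy computation of the proof of \Cref{autbun} (the substitution $z=e^{\zeta}$, $\zeta\mapsto\zeta+2\pi i$) then discards every family except
$$F(z,w)=\big(e^{\alpha+i\beta}z,\;e^{2i\alpha}w\big),\qquad \alpha,\beta\in\mathbb{R}.$$

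Finally I would use boundedness of $\mathcal{W}$ to kill the dilation. Conditions (i)–(iii) force $\mathcal{W}\subset\{e^{-a/2}<|z|<e^{a/2}\}$, whereas $z\mapsto e^{\alpha}z$ rescales $|z|$, so $\alpha\neq 0$ eventually pushes points out of $\mathcal{W}$; equivalently, the infinitesimal generator $z\,\partial_z+2iw\,\partial_w$ of this one-parameter family satisfies $\operatorname{Re}\!\big((z\partial_z+2iw\partial_w)\rho\big)=\eta'(\log|z|^2)$ on $\partial\mathcal{W}$ for a defining function $\rho$, which does not vanish identically, so the field is not tangent to $\partial\mathcal{W}$. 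Hence $\alpha=0$ and $F(z,w)=(e^{i\beta}z,w)$ is a rotation in $z$; conversely these are manifestly automorphisms of $\mathcal{W}$, so $\mathrm{Aut}(\mathcal{W})$ consists exactly of the maps $(z,w)\mapsto(e^{i\theta}z,w)$, $\theta\in\mathbb{R}$, which proves \Cref{autoworm}.
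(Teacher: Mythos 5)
Your overall reduction (Bell's extension via \Cref{bellext} across $\partial\mathcal{W}\setminus\mathcal{A}$, identification of $\partial\mathcal{W}$ with $\partial\mathcal{W}_{\mathrm{un}}$ over the central slab where $\eta\equiv 0$, the monodromy argument recycled from the proof of \Cref{autbun}, and the use of boundedness/the bump function to force $\alpha=0$) follows the paper's strategy, and your last step is in substance the paper's lemma that the automorphisms of the core $\mathcal{B}$ are exactly the rotations. The genuine gap is precisely the step you flagged: you must show that $F$ sends some (hence an open set of) core points back into $\mathcal{B}$, and your proposed justification does not hold up. First, nothing in the definition of $\mathcal{W}$ makes $\partial\mathcal{W}$ non-real-analytic, let alone non-spherical, over the transition region $\{0<\eta(\log|z|^2)<1\}$: $\eta$ is only assumed smooth, even, convex and vanishing exactly on $[-\mu,\mu]$, so it may well be real-analytic on subintervals of $(\mu,a)$; the paper proves failure of local sphericity only on the circles $\mathcal{D}_{\pm}=\{\log|z|^2=\pm\mu\}$, and \Cref{locsph1} asserts sphericity \emph{off} the caps, not non-sphericity \emph{on} them. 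Second, your claim that the image germ $F\bigl((\partial\mathcal{W},(1,2))\bigr)$ is a real-analytic hypersurface germ presupposes a holomorphic extension of $F$ past the boundary; Bell's theorem gives only smooth extension up to the boundary, and a reflection-principle extension would require the target boundary piece to be real-analytic, which is exactly what you are trying to locate. So ``the spherical core cannot land inside a cap'' is unproved as stated.

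The paper closes this hole with a softer topological device that you could adopt: split $\mathcal{B}$ into $\mathrm{I}=\{p:F(p)\in\mathcal{B}\}$ and $\mathrm{II}_{\pm}=\{p:F(p)\in\mathcal{C}_{\pm}\}$; no core point can map to $\mathcal{D}_{\pm}$ because those circles are not locally spherical while $\mathcal{B}$ is (by \Cref{locsph}); the sets $\mathrm{II}_{\pm}$ are open in $\overline{\mathcal{W}}\setminus\mathcal{A}$ and $\mathcal{B}$ is connected, so if $\mathrm{I}=\emptyset$ then $F(\mathcal{B})$ lies in a single cap, say $\mathcal{C}_{+}$, forcing by continuity $F(\mathcal{D}_{+})\cup F(\mathcal{D}_{-})\subset\mathcal{D}_{+}$; applying the same boundary extension to $F^{-1}$ and using injectivity yields a contradiction. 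Hence $\mathrm{I}\neq\emptyset$, and from an open piece of $\mathcal{B}$ mapped into $\mathcal{B}$ your remaining steps (the classification behind \Cref{autbun} plus the $\eta$-argument killing the dilation) go through and propagate to all of $\mathcal{W}$ by uniqueness of holomorphic continuation from a boundary piece. One further caution: your infinitesimal argument for $\alpha=0$ (non-tangency of $\operatorname{Re}(z\partial_z+2iw\partial_w)$ to $\partial\mathcal{W}$) only applies if the whole one-parameter family is known to lie in $\operatorname{Aut}(\mathcal{W})$, not just the single map $F$; the iteration/boundedness argument, or the paper's direct computation with $\eta(\alpha+\log|z|^2)$, is the safe version.
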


One observes that the boundary of the worm consists of three main parts: the core, which is biholomorphic to a subset of the boundary of the unbounded worm domain, and the two caps. In particular, we introduce the following notation. Let
\begin{align*}
    \mathcal{C}_{+}=\left\{(z,w) \in \partial \mathcal{W}:  \log|z|^2 >\mu  \right\}
\text{ and }   \mathcal{C}_{-}=\left\{(z,w) \in \partial \mathcal{W}: \log|z|^2 <-\mu  \right\}
\end{align*}
denote the caps of the worm domain, whose boundaries are the following discs
\begin{align*}
\mathcal{D}_{+}=\left\{(z,w) \in \partial \mathcal{W}: \log|z|^2 =\mu  \right\}
\text{ and }
\mathcal{D}_{-}=\left\{(z,w) \in \partial \mathcal{W}: \log|z|^2 =-\mu  \right\},
\end{align*}
and let
\begin{align*}
    \mathcal{B}=\left\{(z,w) \in \partial \mathcal{W}: -\mu<\log|z|^2<\mu \right\}
\end{align*}
be the core of the worm.

\begin{corollary}
    The boundary of the worm domain is locally spherical away from the exceptional locus and the caps, i.e. $\partial \mathcal{W} \setminus \left(\mathcal{C}_{\pm} \cup \mathcal{A} \right)$ is locally spherical.
\end{corollary}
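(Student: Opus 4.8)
The plan is to transfer \Cref{locsph} from the unbounded worm to $\mathcal{W}$ by means of the elementary observation that, over the core, the two boundaries literally coincide. Partition $\partial\mathcal{W}$ according to the value of $\log|z|^{2}$ as $\partial\mathcal{W}=\mathcal{C}_{+}\cup\mathcal{D}_{+}\cup\mathcal{B}\cup\mathcal{D}_{-}\cup\mathcal{C}_{-}$; since the boundary is strongly pseudoconvex exactly off $\mathcal{A}$, the set $\partial\mathcal{W}\setminus(\mathcal{C}_{\pm}\cup\mathcal{A})$ is precisely $(\mathcal{B}\setminus\mathcal{A})\cup(\mathcal{D}_{+}\setminus\mathcal{A})\cup(\mathcal{D}_{-}\setminus\mathcal{A})$, and it suffices to establish local sphericity at a point of each of the two types.

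First I would take a point $p=(z_{0},w_{0})$ with $-\mu<\log|z_{0}|^{2}<\mu$, so that $p\in\mathcal{B}\setminus\mathcal{A}$ and in particular $w_{0}\neq 0$. By continuity of $\log|z|^{2}$ there is an open neighbourhood $U$ of $p$ in $\mathbb{C}^{2}$ on which $-\mu<\log|z|^{2}<\mu$ still holds; since $\eta^{-1}(0)=[-\mu,\mu]$ this forces $\eta(\log|z|^{2})\equiv 0$ on $U$, so that the defining inequality of $\mathcal{W}$ reduces on $U$ to $|w-e^{i\log|z|^{2}}|^{2}<1$, which is exactly the defining inequality of $\mathcal{W}_{\mathrm{un}}$. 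Hence $\partial\mathcal{W}\cap U=\partial\mathcal{W}_{\mathrm{un}}\cap U$ as CR hypersurfaces, and $p$ is a strongly pseudoconvex point of $\partial\mathcal{W}_{\mathrm{un}}\setminus\mathcal{A}_{\mathrm{un}}$; by \Cref{locsph} the latter is locally spherical at every such point, whence $\partial\mathcal{W}$ is locally spherical at $p$, with an explicit normalizing biholomorphism available as in \Cref{locsph}. This disposes of $\mathcal{B}\setminus\mathcal{A}$.

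The remaining points $p\in\mathcal{D}_{\pm}\setminus\mathcal{A}$, where $\log|z_{0}|^{2}=\pm\mu$, are where the only genuine difficulty lies, and I expect this to be the main obstacle: every neighbourhood of such a $p$ inside $\partial\mathcal{W}$ meets the cap $\mathcal{C}_{\pm}$, so the $\eta$-term can no longer be discarded. The feature to exploit is that $\eta$, being a $C^{\infty}$ convex function vanishing identically on $[-\mu,\mu]$, vanishes together with all of its derivatives at $\pm\mu$; consequently $\eta(\log|z|^{2})$ vanishes to infinite order along the real hypersurface $\{\log|z|^{2}=\pm\mu\}$, and the full Taylor jet at any point of $\mathcal{D}_{\pm}$ of the defining function of $\mathcal{W}$ coincides with that of $\mathcal{W}_{\mathrm{un}}$. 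Since the Chern--Moser (Cartan) CR curvature at a point is a universal function of a finite jet of the defining function, it therefore vanishes identically along $\mathcal{D}_{\pm}$, matching the already established sphericity of $\partial\mathcal{W}_{\mathrm{un}}$ there; the remaining step of the plan is to upgrade this jet-coincidence to an actual local CR equivalence $(\partial\mathcal{W},p)\simeq(\mathbb{H},0)$, using the explicit normalizing map of \Cref{locsph} corrected by the flat term. Passing from ``the jet agrees along $\mathcal{D}_{\pm}$'' to ``the germ at $p$ is spherical'' --- equivalently, controlling the cap side near $\mathcal{D}_{\pm}$ --- is the delicate point; away from $\mathcal{D}_{\pm}$, i.e. on $\mathcal{B}\setminus\mathcal{A}$, no such issue arises and the corollary follows directly from \Cref{locsph}.
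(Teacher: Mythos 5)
The first half of your argument --- for points of $\mathcal{B}\setminus\mathcal{A}$, where $\eta(\log|z|^{2})\equiv 0$ on a whole neighbourhood, so that $\partial\mathcal{W}$ and $\partial\mathcal{W}_{\mathrm{un}}$ coincide as germs and \Cref{locsph} applies verbatim --- is exactly the paper's (one-line) proof, and it is correct.

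The second half, however, contains a genuine error: you are trying to prove local sphericity at points of $\mathcal{D}_{\pm}\setminus\mathcal{A}$, and this is false. The paper itself later proves a lemma stating that $\mathcal{D}_{+}$ and $\mathcal{D}_{-}$ are \emph{not} locally spherical, and that fact is essential to the proof of the main theorem (it is what prevents an automorphism from sending core points onto the rim of a cap). The corollary's phrase ``away from the caps'' must therefore be read as excluding the \emph{closed} caps, i.e.\ the set at issue is really $\mathcal{B}\setminus\mathcal{A}$; with the open caps $\mathcal{C}_{\pm}$ the literal set difference does pick up $\mathcal{D}_{\pm}$, but sphericity does not hold there. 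Your own mechanism shows why the intended upgrade cannot work: flatness of $\eta$ at $\pm\mu$ (which is correct --- $\eta\in C^{\infty}$ and $\eta\equiv 0$ on $[-\mu,\mu]$ forces all derivatives to vanish at $\pm\mu$) gives coincidence of the infinite jet of the defining functions, hence vanishing of the Chern--Moser/Cartan tensor \emph{at the points of} $\mathcal{D}_{\pm}$ only. Local sphericity at $p\in\mathcal{D}_{\pm}$ requires the tensor to vanish on a full neighbourhood of $p$ in $\partial\mathcal{W}$, and every such neighbourhood meets the cap side where $\eta>0$ and the hypersurface genuinely differs from $\partial\mathcal{W}_{\mathrm{un}}$; there the tensor need not vanish, and indeed cannot vanish identically, since otherwise the explicit map of \Cref{locsph} would extend a spherical structure across $\mathcal{D}_{\pm}$, contradicting the paper's lemma (whose proof is precisely a uniqueness/analytic-continuation argument of this kind). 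So you should delete the $\mathcal{D}_{\pm}$ discussion, note that the statement concerns only the locus where $-\mu<\log|z|^{2}<\mu$ and $w\neq 0$, and keep your first paragraph, which is the whole proof.
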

\begin{proof}
    This follows immediately from \Cref{locsph}.
\end{proof}

For the proof of \Cref{autoworm} we need the following lemmas:

\begin{lemma}
    The automorphisms of the main core $\mathcal{B}$ of the worm are precisely the maps of the form $(z,w) \mapsto \left(e^{i \beta}z, w \right),$ for $\beta \in \mathbb{R},$ that is, rotations in the $z$-variable.
\end{lemma}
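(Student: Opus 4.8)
The plan is to reduce everything to the results of Section~3. On $\mathcal{B}$ one has $-\mu<\log|z|^2<\mu$, so $\eta(\log|z|^2)=0$ by condition (ii) in the definition of the worm, and the defining equation of $\partial\mathcal{W}$ collapses to $|w-e^{i\log|z|^2}|^2=1$; thus $\mathcal{B}$ is precisely the open subset $\{(z,w)\in\partial\mathcal{W}_{\mathrm{un}}: -\mu<\log|z|^2<\mu\}$ of the boundary of the unbounded worm, carrying the same local defining function $\rho$ used in Section~3. Its strongly pseudoconvex part $\mathcal{B}\setminus\mathcal{A}$ is the open subset with $w\neq 0$; it is connected (over the annulus $e^{-\mu/2}<|z|<e^{\mu/2}$ each $w$-fibre is a circle with one point removed) and contains the base point $(1,2)$.

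First I would take an arbitrary CR automorphism $F=(f,g)$ of $\mathcal{B}$. Since strong pseudoconvexity is a CR invariant, $F$ maps $\mathcal{B}\setminus\mathcal{A}$ onto itself, and on that real-analytic strongly pseudoconvex hypersurface $F$ is real-analytic; so $F$ restricts to a real-analytic CR automorphism of the connected hypersurface $\mathcal{B}\setminus\mathcal{A}$. Near $(1,2)$ the local defining function is exactly the one analyzed in \Cref{coniso}, \Cref{autisobun}, and \Cref{autnonisobun}, so the germ of $F$ at $(1,2)$ is one of the explicit local CR automorphisms $H_1,\dots,H_5$ obtained there.

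Next I would run the logarithmic monodromy argument from the proof of \Cref{autbun} without change. In $\mathcal{B}$ the argument of $z$ still sweeps out all of $\mathbb{R}/2\pi\mathbb{Z}$, so, writing $z=e^{\zeta}$, single-valuedness of $F$ on $\mathcal{B}\setminus\mathcal{A}$ forces $H_j(\zeta,w)=H_j(\zeta+2\pi i n,w)$ for all $n\in\mathbb{Z}$; the computations in the proof of \Cref{autbun} show this is impossible for $H_1,H_2,H_3,H_4$ and for $H_5$ unless $\gamma=0$, leaving $F(z,w)=(e^{\alpha+i\beta}z,e^{2i\alpha}w)$ for some $\alpha,\beta\in\mathbb{R}$ near $(1,2)$. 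By uniqueness of analytic continuation on the connected set $\mathcal{B}\setminus\mathcal{A}$ (on which this map is globally defined and agrees with $F$ near $(1,2)$), we get $F(z,w)=(e^{\alpha+i\beta}z,e^{2i\alpha}w)$ on all of $\mathcal{B}$. Finally, $F(\mathcal{B})=\mathcal{B}$: under this map $\log|z|^2\mapsto\log|z|^2+2\alpha$, and a translation carries the open interval $(-\mu,\mu)$ onto itself only if it is trivial, so $\alpha=0$ and $F(z,w)=(e^{i\beta}z,w)$. The reverse inclusion is immediate, since a rotation in $z$ leaves both $\log|z|^2$ and $|w-e^{i\log|z|^2}|^2$ unchanged and hence preserves $\mathcal{B}$.

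I expect the only delicate point to be the passage from the germ of $F$ at the single point $(1,2)$ to $F$ on all of $\mathcal{B}$: one must check that $F$ is real-analytic on the strongly pseudoconvex part, that $\mathcal{B}\setminus\mathcal{A}$ is connected so this germ determines $F$ there by analytic continuation, and that the monodromy obstruction computed in Section~3 is unaffected by the restriction $-\mu<\log|z|^2<\mu$ — the functional equation $H_j(\zeta,w)=H_j(\zeta+2\pi i,w)$, being analytic in $\zeta$, holds on the smaller annular strip exactly when it holds globally. Everything else is a direct appeal to \Cref{coniso}, \Cref{autisobun}, \Cref{autnonisobun}, and \Cref{autbun}.
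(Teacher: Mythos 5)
Your proposal is correct and takes essentially the same route as the paper: reduce to the maps $(z,w)\mapsto\left(e^{\alpha+i\beta}z,e^{2i\alpha}w\right)$ coming from the Section~3 classification, and then exclude $\alpha\neq 0$ because such a map shifts $\log|z|^2$ by $2\alpha$ and therefore cannot preserve the band $-\mu<\log|z|^2<\mu$ — which is exactly the content of the paper's divisibility identity with $\eta$, since $\eta>0$ off that band. The only difference is one of detail: you justify the reduction explicitly (germ classification at $(1,2)$, the monodromy argument on the full $z$-annulus contained in $\mathcal{B}$, analytic continuation over the connected set $\mathcal{B}\setminus\mathcal{A}$), where the paper simply invokes the inclusion $\mathrm{Aut}\left(\partial\mathcal{W}_{\mathrm{un}}\cap(\overline{\mathcal{W}}\setminus\mathcal{A})\right)\subseteq\mathrm{Aut}\left(\partial\mathcal{W}_{\mathrm{un}}\setminus\mathcal{A}\right)$ together with \Cref{autbun}.
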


\begin{proof}
Since $\mathrm{Aut}( \partial \mathcal{W}_{\mathrm{un}} \cap (\overline{\mathcal{W}} \setminus \mathcal{A})) \subseteq \mathrm{Aut}(\partial \mathcal{W}_{\mathrm{un}} \setminus \mathcal{A}),$ we examine whether every automorphism of the boundary of the unbounded worm restricts to an automorphism of $\mathcal{B}.$ Assume that there exists a smooth function $B \not\equiv 1,$ such that 
\begin{align*}
    \left|e^{2i \alpha}w-e^{i \log \left|e^{2i\alpha+\beta}z \right|^2}\right|^2-1+\eta \left( \log{ \left| e^{\alpha+i \beta}z \right|^2} \right)=B(z,w) \left( \left|w-e^{i \log|z|^2} \right|^2-1 \right),
\end{align*}
Equivalently, this yields
\begin{align*}
    \left|w-e^{i \log|z|^2} \right|^2-1+ \eta \left(\alpha + \log |z|^2 \right)=B(z,w) \left( \left|w-e^{i \log|z|^2} \right|^2-1 \right),
\end{align*}
which can be rewritten as
\begin{align*}
    \left( \left|w-e^{i \log|z|^2} \right|^2-1 \right) \left( B(z,w) -1 \right)=\eta \left(\alpha+ \log|z|^2 \right).
\end{align*}
Assume that $\alpha>0.$ Choose $z\in \mathbb{C}$ such that $|z|$ lies in a neighborhood of $e^{\mu}$ with $\log|z|^2< \mu$ and $\log|z|^2+\alpha>\mu.$
Then the left-hand side of the equation vanishes, while $\eta \left( \alpha+\log|z|^2 \right)>0,$ yielding a contradiction.
Similarly, if $\alpha<0,$ consider $|z|$ in a neighborhood of $e^{-\mu},$ which gives a contradiction, as above.
Hence, we must have $\alpha=0,$ and the result follows.
\end{proof}

\begin{lemma}
    $\mathcal{B}=\partial \mathcal{W}_{\mathrm{un}} \cap (\overline{\mathcal{W}} \setminus \mathcal{A})$ is a connected set.
\end{lemma}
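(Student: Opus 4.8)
The plan is to give an explicit description of the set $\mathcal{B} = \partial\mathcal{W}_{\mathrm{un}} \cap (\overline{\mathcal{W}}\setminus\mathcal{A})$ and exhibit a continuous deformation retract, or at the very least a path between any two points, staying inside the set. First I would unwind the definitions: a point $(z,w)$ lies in $\mathcal{B}$ iff $|w - e^{i\log|z|^2}|^2 = 1$ (so it is on $\partial\mathcal{W}_{\mathrm{un}}$), $-\mu < \log|z|^2 < \mu$ (so it is in the open slab where $\eta \equiv 0$, hence in $\overline{\mathcal{W}}$), and $w \neq 0$ (to avoid $\mathcal{A}$; note that on the slab $\eta = 0$ the condition defining $\mathcal{A}$ coincides with $w=0$ on the boundary sphere). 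Writing $z = re^{i\psi}$ with $\log r^2 = s \in (-\mu,\mu)$ and $\psi \in \mathbb{R}/2\pi\mathbb{Z}$, the fibre over each $(s,\psi)$ is the circle $\{w : |w - e^{is}|^2 = 1\}$ minus the single point $w = 0$ (which occurs only when $e^{is}$ is at distance $1$ from the origin, i.e. always — the circle $|w-e^{is}|=1$ always passes through the origin). So over every $(s,\psi)$ the fibre is a punctured circle, hence connected, and it varies continuously with $(s,\psi)$.

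The key steps, in order: (1) establish the coordinate description above and check that for every admissible $(s,\psi)$ the fibre $\{|w-e^{is}|=1\}\setminus\{0\}$ is a nonempty connected arc; (2) observe that the base $\{(s,\psi) : s \in (-\mu,\mu), \psi \in \mathbb{R}/2\pi\mathbb{Z}\}$ is connected (it is an open interval times a circle); (3) conclude connectedness of the total space by the standard fact that a continuous surjection from a connected base with connected fibres, which is moreover a fibre bundle (or admits local sections), has connected total space — concretely, given two points of $\mathcal{B}$, join their base-projections by a path in the base, lift it to a path in $\mathcal{B}$ using the explicit parametrization $w = e^{is} + e^{i\phi}$ with $\phi \in \mathbb{R}/2\pi\mathbb{Z}$, $e^{is}+e^{i\phi}\neq 0$, and then connect the endpoints within a single punctured-circle fibre. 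Because everything is given by explicit elementary formulas, each of these verifications is a short computation rather than an appeal to bundle theory.

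The main obstacle — really the only subtlety — is bookkeeping around the puncture $w = 0$: one must confirm that removing $\mathcal{A}$ removes exactly one point from each fibre circle and never disconnects it, and that the removed points $\{(z,0)\}$ do not accumulate in a way that obstructs the path-lifting (they do not, since in each fibre the puncture is a single interior point of a circle, leaving a connected arc, and the puncture locus $s \mapsto 0$ is a smooth section avoided transversally by the lifts we construct). I would also note that one could phrase the argument more slickly: $\mathcal{B}$ is the image of the connected set $\{(s,\psi,\phi) \in (-\mu,\mu)\times(\mathbb{R}/2\pi\mathbb{Z})^2 : e^{is} + e^{i\phi} \neq 0\}$ under the continuous map $(s,\psi,\phi) \mapsto (e^{(s/2)+i\psi}, e^{is}+e^{i\phi})$, and the source is connected because the excluded set $\{e^{is}+e^{i\phi}=0\}$ is a codimension-one submanifold (in fact the graph $\phi = s+\pi$) whose complement in the connected $3$-manifold $(-\mu,\mu)\times(\mathbb{R}/2\pi\mathbb{Z})^2$ is still connected. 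Either way the conclusion follows.
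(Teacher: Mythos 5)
Your argument is correct in substance and takes a genuinely different route from the paper's. The paper slices $\mathcal{B}$ by the modulus $r=|z|$ into sets $S_r$ (punctured unit circles in the $w$-plane, crossed with the circle of arguments of $z$) and deduces connectedness of the union from a case analysis of the pairwise intersections of the circles $|w-e^{i\log r^2}|=1$, treating the antipodal-centre case $r_1=e^{\pi/2+2\pi n}r_2$ separately via an intermediate radius and the Mean Value Theorem. You instead fibre over $z$ itself: with $s=\log|z|^2$ in the slab where $\eta=0$ and the fibre parametrized by $w=e^{is}+e^{i\phi}$, you exhibit $\mathcal{B}$ as the continuous image of the connected set $\{(s,\psi,\phi): e^{is}+e^{i\phi}\neq 0\}$. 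This is cleaner and more robust: it avoids the paper's case analysis and its somewhat awkward identification of subsets of $\mathbb{C}^2$ with plane circles, and it isolates exactly what removing $\mathcal{A}$ does, namely delete the single point $w=0$ from each fibre circle. One small bookkeeping correction: intersecting $\partial\mathcal{W}_{\mathrm{un}}$ with $\overline{\mathcal{W}}$ forces $\eta(\log|z|^2)=0$, i.e. the closed slab $|\log|z|^2|\le\mu$ rather than the open one you wrote; this is harmless, since the closed-slab set lies between the open-slab set and its closure, so connectedness transfers.

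The one point you should repair is the justification in your ``slick'' variant: it is not true in general that a codimension-one submanifold of a connected $3$-manifold has connected complement --- indeed $\{s=0\}\times(\mathbb{R}/2\pi\mathbb{Z})^2$ is codimension one in $(-\mu,\mu)\times(\mathbb{R}/2\pi\mathbb{Z})^2$ and disconnects it. What saves your argument is the specific form of the excluded set: it is the graph $\phi=s+\pi$, so the substitution $\theta=\phi-s-\pi$ identifies the complement with $(-\mu,\mu)\times(\mathbb{R}/2\pi\mathbb{Z})\times\left((\mathbb{R}/2\pi\mathbb{Z})\setminus\{0\}\right)$, a product of connected sets. The same issue touches your path-lifting sketch: since $\mu$ is not assumed smaller than $\pi$, a lift with $\phi$ held constant over a long $s$-path can run into the puncture $\phi=s+\pi$, so the lift must be specified, e.g. $\phi(t)=s(t)$, which never meets the puncture locus; with that choice (move within the initial fibre to $\phi=s$, transport along the base, then move within the final fibre) the proof is complete.
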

\begin{proof}
Let $z=r e^{i \theta}$ with $r>0$ and $\theta \in [0,2 \pi)$ and $e^{-\mu}< r=|z| < e^{\mu}.$
Then
\begin{align*}
    \mathcal{B} =\left\{(z,w) \in \mathbb{C}^2: \left|w-e^{i \log|z|^2} \right|^2=1, w \neq 0 \right\}= \bigcup_{e^{-\mu}<r<e^\mu} S_{r} \times [0,2\pi)
\end{align*}
where
\begin{align*}
    S_{r}&=\left\{(z,w) \in \mathbb{C}^2: |z|=r, \left|w-e^{i \log|z|^2} \right|^2=1, w \neq 0 \right\} \\
    &\cong \left\{(u,v) \in \mathbb{R}^2: \left( u-\cos\left(\log{r^2} \right)\right)^2+ \left( v- \sin \left(\log{r^2} \right)\right)^2=1 \right\} \setminus \{(0,0)\}.
\end{align*}
For $r_1 \neq r_2$ and $r_1 \neq e^{\frac{\pi}{2}+2 \pi n}r_2,$ we obtain for any $ n \in \mathbb{Z}:$
\begin{align*}
    S_{r_1} \cap S_{r_2}=\left\{ \left(\cos \left(\log{r_1^2} \right)+ \cos \left(\log{r_2^2} \right), \sin \left( \log{r_1^2}\right)+ \sin \left(\log{r_2^2} \right) \right) \right\}
\end{align*}
If $r_1 =e^{\frac{\pi}{2}+2 \pi n}r_2,$ for $ n \in \mathbb{Z}:$ $S_{r_1} \cap S_{r_2}= \emptyset.$
Hence, for $r_1 \neq e^{\frac{\pi}{2}+2 \pi n}r_2$, $n \in \mathbb{Z},$ the union $S_{r_1} \cup S_{r_2}$ is a connected set.

Now, consider $r_1=e^{\frac{\pi}{2}+2 \pi n}r_2,$ for some $n \in \mathbb{Z} \setminus\{0\},$ and let $f(r)=\log \left(\frac{r_1}{r} \right).$ Then, for $ r,r_1 >0$ we have $f(r_1)=0$ and $f(r_2)=\frac{\pi}{2}+ \pi n.$ By the Mean Value Theorem, there exists $r \in (r_1,r_2)$ such that $f(r)$ belongs to the interval with endpoints $0$ and $\frac{\pi}{2}+\pi n.$ Hence $S_{r} \cap S_{r_1} \neq \emptyset$ and $S_{r} \cap S_{r_2} \neq \emptyset,$ and $S_{r} \cup S_{r_1} \cup S_{r_2}$ is connected.

Therefore, $S:=\bigcup_{e^{-\mu}<r < e^{\mu}}S_{r}$ is connected, and $\mathcal{B}$ is connected as the union of a connected set with an interval.
\end{proof}

\begin{lemma}
   The sets $\mathcal{D}_+$ and $\mathcal{D}_{-}$ are not locally spherical.
\end{lemma}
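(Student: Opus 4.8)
The plan is to reduce local sphericity of $\partial\mathcal{W}$ at a point of $\mathcal{D}_{\pm}$ to the real-analyticity of the bump function $\eta$ near $\pm\mu$, which fails by construction. I would fix $p=(z_0,w_0)\in\mathcal{D}_{+}$ and treat $\mathcal{D}_{-}$ identically with $\mu$ replaced by $-\mu$. Since $p\in\partial\mathcal{W}$ and $\log|z_0|^2=\mu$, we have $\eta(\mu)=0$ and $|w_0-e^{i\mu}|^2=1$, so in particular $w_0\neq e^{i\mu}$. First I would dispose of the degenerate case $w_0=0$: then $p\in\mathcal{A}$, the Levi form of $\partial\mathcal{W}$ vanishes at $p$, and a Levi-degenerate point cannot be locally CR-equivalent to the (Levi-nondegenerate) Heisenberg hypersurface, so such a $p$ is not a point of local sphericity. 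Hence I may assume $w_0\neq0$, so that $\partial\mathcal{W}$ is strongly pseudoconvex at $p$.

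Next I would argue by contradiction: suppose $\partial\mathcal{W}$ is locally spherical at $p$, i.e.\ some neighborhood $U$ of $p$ in $\partial\mathcal{W}$ admits a CR-diffeomorphism onto an open piece of $\mathbb{H}$. By the reflection principle for CR maps between strongly pseudoconvex hypersurfaces one of which is real-analytic (see \cite{BER1999}), this map extends to a biholomorphism between ambient neighborhoods in $\mathbb{C}^2$; consequently, after shrinking $U$, the hypersurface $\partial\mathcal{W}$ is a \emph{real-analytic} submanifold of $\mathbb{C}^2$ in $U$. One can instead obtain this from the explicit local biholomorphism to $\mathbb{H}$ of \Cref{locsph}: on the core side of $p$ it differs from the assumed sphericalizing map by an automorphism of $\mathbb{H}$, and unique continuation of CR functions then identifies the latter near $p$ with the ambient holomorphic extension of that composition, again forcing $\partial\mathcal{W}$ to be real-analytic near $p$.

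I would then recover $\eta$ from the germ of $\partial\mathcal{W}$ at $p$. The function $\nu(z,w):=|w-e^{i\log|z|^2}|$ is real-analytic near $p$, being the modulus of the real-analytic, nowhere vanishing function $w-e^{i\log|z|^2}$ (recall $\log|z|^2$ is real-analytic on $\{z\neq0\}$); and on $U\subset\partial\mathcal{W}$ one has $\nu^2=1-\eta(\log|z|^2)$. Moreover $h:=\log|z|^2$ restricts to a real-analytic submersion $U\to\mathbb{R}$ near $p$, since if $dh$ and the differential of a defining function $r$ of $\partial\mathcal{W}$ were linearly dependent at $p$, the vanishing $w$-component of $dh$ would force $r_w(p)=\overline{w_0}-e^{-i\mu}=0$, i.e.\ $w_0=e^{i\mu}$, a contradiction. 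Choosing a real-analytic local section $\sigma$ of $h$ through $p$, defined near $\mu$, the relation $\eta(t)=1-\nu(\sigma(t))^2$ exhibits $\eta$ as real-analytic on a neighborhood of $\mu$. But $\eta\equiv0$ on $[-\mu,\mu]$ while $\eta>0$ on $(\mu,\infty)$, so $\eta$ is not real-analytic at $\mu$ --- the desired contradiction.

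The hard part is the regularity step in the middle: showing that local sphericity of the merely $C^{\infty}$ hypersurface $\partial\mathcal{W}$ propagates to real-analyticity near $p$. This is precisely where one must invoke the analyticity of CR mappings into a real-analytic strongly pseudoconvex hypersurface (equivalently, that Chern--Moser flatness of a smooth strongly pseudoconvex hypersurface implies real-analyticity) --- a standard but nontrivial fact. Everything else is elementary. A purely computational alternative, which bypasses this point, would be to evaluate Cartan's sixth-order CR-curvature invariant of $\partial\mathcal{W}$ on the cap region $\{\log|z|^2>\mu\}$ and verify it does not vanish identically as $\log|z|^2\to\mu^{+}$ for any admissible $\eta$; I expect that works but is much more laborious, so I would favor the argument above.
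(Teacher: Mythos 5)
Your endgame is correct and appealingly elementary: once $\partial\mathcal{W}$ is known to be real-analytic near $p\in\mathcal{D}_{+}$, the submersion claim for $h=\log|z|^2$ (your computation $r_w(p)=\bar w_0-e^{-i\mu}\neq 0$ is right), the real-analytic section $\sigma$, and the identity $\eta(t)=1-\nu(\sigma(t))^2$ do give a contradiction with $\eta^{-1}(0)=[-\mu,\mu]$; the $w_0=0$ case is also handled correctly. The gap is exactly at the step you yourself flag as the hard part. There is no ``reflection principle for CR maps between strongly pseudoconvex hypersurfaces one of which is real-analytic'' in the form you need: every version of the Lewy--Pinchuk reflection principle (including those in Baouendi--Ebenfelt--Rothschild) requires the \emph{source} hypersurface to be real-analytic (or algebraic) in order to continue the map across it. Moreover, the fact you call standard --- that a smooth strongly pseudoconvex, Chern--Moser flat (i.e.\ smoothly CR-equivalent to $\mathbb{H}$) hypersurface must be real-analytic --- is false. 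Take $f$ holomorphic on the unit ball, smooth up to the boundary, $C^1$-small, and holomorphically non-extendable across any boundary point, and push a piece of the sphere forward by $(z,w)\mapsto(z+f(z,w),w)$: the image is a smooth strongly pseudoconvex hypersurface, CR-diffeomorphic to a piece of the sphere, yet not real-analytic (if it were, Lewy--Pinchuk applied to this now bi-real-analytic CR diffeomorphism would extend $f$ across the sphere). So if ``locally spherical'' is taken intrinsically, your main route collapses at precisely this point, and no citation will fill it.

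There are two ways out, both of which you half-state. First, if local sphericity is taken in the ambient sense the paper itself uses in its proof --- a holomorphic map on a neighborhood of $p$, locally biholomorphic, carrying $\partial\mathcal{W}$ into $\mathbb{H}$ --- then real-analyticity of $\partial\mathcal{W}$ near $p$ is immediate (it is locally a piece of the preimage of $\mathbb{H}$), the dubious step disappears, and your section/$\eta$ argument gives a complete proof that is genuinely different from the paper's (the paper instead compares the hypothetical map with the explicit map of \Cref{locsph} and derives the contradiction from their forced agreement on the core). Second, with the intrinsic definition, the correct repair is your parenthetical alternative, which is in substance the paper's argument: on the core side $\partial\mathcal{W}$ coincides with the real-analytic $\partial\mathcal{W}_{\mathrm{un}}$, so there the genuine reflection principle applies, the sphericalizing map equals $\Gamma\circ F$ with $F$ the map of \Cref{locsph} and $\Gamma$ an automorphism of $\mathbb{H}$; one then propagates this identity across $\mathcal{D}_{+}$ by one-sided holomorphic extension and uniqueness of CR functions on the connected, strongly pseudoconvex piece through $p$ (this uses $w_0\neq 0$, which you have), obtaining an ambient holomorphic map near $p$ sending $\partial\mathcal{W}$ into $\mathbb{H}$, after which either your $\eta$ argument or a direct comparison of $\partial\mathcal{W}$ with $\partial\mathcal{W}_{\mathrm{un}}$ finishes. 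You should either fix the ambient definition explicitly or promote that alternative to the actual argument; as written, the primary route rests on a nonexistent theorem.
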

\begin{proof}
Let $p \in \mathcal{D}_+$ and consider an arbitrary neighborhood $U$ of $p$ in $\mathbb{C}^2$. Assume that there exists a holomorphic map $F':U\cap (\overline{\mathcal{W}} \setminus \mathcal{A}) \to \mathbb{H}_2.$ Consider also the map $F(z,w)= \left(i-iz^i, i-2iz^i+ \frac{2iz^{2i}}{w} \right)$ given in \Cref{locsph}, which maps locally $U \cap \left( \partial \mathcal{W}_{\mathrm{un}} \setminus \mathcal{A}_{\mathrm{un}} \right) $ to $\mathbb{H}_2.$ Then, we have $F|_{U \cap \mathcal{B}}=F'|_{U \cap \mathcal{B}},$ i.e., $F$ and $F'$ agree on $ U \cap \mathcal{B},$ and hence they agree everywhere, a contradiction. The proof for $\mathcal{D}_{-}$ is analogous.
\end{proof}

Now, we can prove Theorem 1:

\begin{proof}
    Let $F=(F_1,F_2)$ be an automorphism of the bounded worm domain $\mathcal{W}.$ Then, $F$ can be extended smoothly to the strongly pseudoconvex points of the boundary by Bell's result \cite{bell1984local}. This means, by a slight abuse of notation, that it can be extended to a map $F \in C^{\infty}(\overline{\mathcal{W}} \setminus \mathcal{A}).$

We consider the following families of points in $\mathcal{B}:=\partial \mathcal{W} \cap (\partial \mathcal{W}_{\mathrm{un}} \setminus \mathcal{A}):$
\begin{itemize}
    \item $\mathrm{I}=\left\{p \in \mathcal{B}: F(p) \in \mathcal{B} \right\}$
    \item $\mathrm{II}_{+}=\left\{p \in \mathcal{B}: F(p) \in \mathcal{C}_{+} \right\}$
    \item $\mathrm{II}_{-}=\left\{p \in \mathcal{B}: F(p) \in \mathcal{C}_{-} \right\}$
\end{itemize}

Note that since $\mathcal{D}_{\pm}$ are not locally spherical, no point of $\mathcal{B}$ can be mapped to $\mathcal{D}_{\pm}.$
If $\mathrm{I} \neq \emptyset,$ then there exists $p \in \mathrm{I}.$ By continuity of $F,$ there exists an open subset $U \subseteq \mathrm{I}$ with $p \in U.$ Then $F|_{U} \in \operatorname{Aut}(\partial \mathcal{W}_{\mathrm{un}} \cap  ( \overline{\mathcal{W}} \setminus \mathcal{A} )) \subseteq \operatorname{Aut}(\partial \mathcal{W}_{\mathrm{un}} \setminus \mathcal{A}_{\mathrm{un}}).$ By Lemma 4, these are maps of the form $F(z,w)=\left(e^{i \theta}z,w \right)$ for $\theta \in \mathbb{R}.$ Since this is true in an open set of the boundary, it determines all automorphisms of the bounded worm domain.

Suppose now $\mathrm{I} = \emptyset,$ and we will argue with contradiction, using the following lemma:

\begin{lemma}
    The sets $\mathrm{II}_{+}$ and $\mathrm{II}_{-}$ are open in $ \overline{\mathcal{W}} \setminus \mathcal{A}.$
\end{lemma}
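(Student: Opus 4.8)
The aim is to show that $\mathrm{II}_+$ and $\mathrm{II}_-$ are open in the relative topology that $\mathcal B$ inherits as a subset of $\overline{\mathcal W}\setminus\mathcal A$; together with the connectedness of $\mathcal B$ and the standing hypothesis $\mathrm I=\emptyset$, this forces exactly one of $\mathrm{II}_+,\mathrm{II}_-$ to coincide with $\mathcal B$ and the other to be empty, and the remaining steps of the proof of \Cref{autoworm} then derive a contradiction from that. The first ingredient I would record is that, as already noted in the proof of \Cref{autoworm}, $F$ extends to a map $F\in C^\infty(\overline{\mathcal W}\setminus\mathcal A)$ by Bell's theorem (\Cref{bellext}), and the same applies to the automorphism $F^{-1}$. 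Since $F^{-1}\circ F=\mathrm{id}$ on the dense set $\mathcal W$, it remains the identity on all of $\overline{\mathcal W}\setminus\mathcal A$ by continuity, so the extended $F$ is a homeomorphism of $\overline{\mathcal W}\setminus\mathcal A$ onto itself. In particular $F(\partial\mathcal W\setminus\mathcal A)\subseteq\partial\mathcal W$: such a point $p$ is a limit of points of $\mathcal W$, hence $F(p)\in\overline{\mathcal W}$, while $F(p)\notin\mathcal W$ --- for otherwise, choosing $p_n\in\mathcal W$ with $p_n\to p$ and applying the continuous map $F^{-1}|_{\mathcal W}$ to $F(p_n)\to F(p)\in\mathcal W$ would yield $p_n\to F^{-1}(F(p))\in\mathcal W$, i.e. $p\in\mathcal W$, a contradiction.

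Next I would check that $\mathcal C_+$ and $\mathcal C_-$ are disjoint, relatively open subsets of $\partial\mathcal W$. Indeed, on $\partial\mathcal W$ one has $1-\eta(\log|z|^2)=|w-e^{i\log|z|^2}|^2\ge 0$, so property (iii) of the bump function forces $|\log|z|^2|\le a$ on $\partial\mathcal W$; in particular $\partial\mathcal W$ is bounded away from $\{z=0\}$, and the function $\lambda(z,w)=\log|z|^2$ is continuous on $\partial\mathcal W$. Hence $\mathcal C_+=\lambda^{-1}((\mu,\infty))\cap\partial\mathcal W$ and $\mathcal C_-=\lambda^{-1}((-\infty,-\mu))\cap\partial\mathcal W$ are open in $\partial\mathcal W$, and they are disjoint. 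Combining this with the first step, the restriction $F|_{\mathcal B}\colon\mathcal B\to\partial\mathcal W$ is continuous, and therefore $\mathrm{II}_{\pm}=(F|_{\mathcal B})^{-1}(\mathcal C_{\pm})$ are open in $\mathcal B$, which is the assertion of the lemma.

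For completeness, this feeds into the proof of \Cref{autoworm} as follows: the extended $F$ is a CR diffeomorphism along $\partial\mathcal W\setminus\mathcal A$, hence preserves strong pseudoconvexity, so no point of $\mathcal B$ can be sent into the weakly pseudoconvex annulus $\mathcal A$; and, since $\mathcal B$ is locally spherical while $\mathcal D_\pm$ are not, no point of $\mathcal B$ is sent into $\mathcal D_{\pm}$. Thus $\mathcal B=\mathrm I\sqcup\mathrm{II}_+\sqcup\mathrm{II}_-$ is a disjoint union of relatively open sets, and connectedness together with $\mathrm I=\emptyset$ gives $F(\mathcal B)\subseteq\mathcal C_+$ or $F(\mathcal B)\subseteq\mathcal C_-$. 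I expect the only genuinely delicate point to be the boundary-to-boundary property of the extended map in the first step (the homeomorphism argument with $F^{-1}$); once that is in place, the openness itself is a routine continuity statement and the rest is point-set topology.
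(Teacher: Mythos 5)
Your proof is correct and follows essentially the same route as the paper: $\mathrm{II}_{\pm}$ are the preimages, under the continuous boundary extension of $F$, of the caps $\mathcal{C}_{\pm}$, which are relatively open because they are cut out by the open conditions $\pm\log|z|^2>\mu$; you simply make explicit two points the paper leaves implicit, namely that the extension maps $\partial\mathcal{W}\setminus\mathcal{A}$ into $\partial\mathcal{W}$ and that openness is meant relative to $\mathcal{B}$. One small caveat: your passing claim that the extended $F$ is a homeomorphism of $\overline{\mathcal{W}}\setminus\mathcal{A}$ onto itself is not justified as stated (it presupposes that $F$ never sends a strictly pseudoconvex boundary point into $\mathcal{A}$, where the extension of $F^{-1}$ is unavailable), but your argument never uses it --- the direct limit argument with $F^{-1}|_{\mathcal{W}}$ establishing $F(\partial\mathcal{W}\setminus\mathcal{A})\subseteq\partial\mathcal{W}$ is all that is needed.
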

\begin{proof}
Since $\mathrm{II}_{+}=F^{-1}\left(\mathcal{C}_{+} \right),$ we need to show that $\mathcal{C}_{+}$ is open in $\overline{\mathcal{W}} \setminus \mathcal{A}.$
Indeed,
\begin{align*}
    \mathcal{C}_{+}=\left(\overline{\mathcal{W}}\setminus \mathcal{A} \right) \cap \{ \log{|z|^2} > \mu \}.
\end{align*}
Similarly, $\mathcal{C}_{-}=\left( \overline{\mathcal{W}}\setminus \mathcal{A} \right) \cap \{ \log{|z|^2} < -\mu \}$
is open in $\overline{\mathcal{W}}\setminus \mathcal{A}.$
\end{proof}

Since $\mathrm{I} = \emptyset,$ we can write $\partial \mathcal{W}_{\mathrm{un}} \cap (\overline{\mathcal{W}} \setminus \mathcal{A})= \mathrm{II}_{+} \dot{\cup} \mathrm{II}_{-}.$
The left-hand side is a connected set, written as a disjoint union of two open sets. Hence, without loss of generality, $\mathrm{II}_{+}=\partial \mathcal{W}_{\mathrm{un}} \cap (\overline{\mathcal{W}} \setminus \mathcal{A}),$ and $\mathrm{II}_{-}=\emptyset.$ This means that for every $p \in \mathcal{B},$ we have $F(p) \in \mathcal{C}_{+}.$ Since $\mathcal{D}_{\pm}$ are not locally spherical, $F(\mathcal{D}_{\pm}) \subseteq \mathcal{D}_+$ by continuity.

Considering $F^{-1},$ this is also an automorphism of $\mathcal{W}.$ Using Bell's result again \cite{bell1984local}, $F^{-1}$ can be extended to all strictly pseudoconvex points of the boundary. Then $F(\mathcal{D}_{\pm}) \subseteq \mathcal{D}_+$ leads to a contradiction.

\end{proof}

It is evident that the maps $(z,w) \mapsto \left(e^{i \theta} z, w \right)$ extend as $C^{\infty}$ diffeomorphisms to the entire boundary, despite the fact that the worm domain satisfies only the local version of Condition $\mathrm{R}$.

\bibliographystyle{amsplain}
\bibliography{references}
\end{document}